\newcommand{\R}{\mathbb R}
\newcommand{\HH}{{\mathbb{H}}}
\def\a'{\`a}
\def\e'{\`e}
\def\o'{\`o}
\def\u'{\`u}
\newtheorem{teor}{Theorem}[section]
\newtheorem{lemma}[teor]{Lemma}
\newtheorem{prop}[teor]{Proposition}
\newtheorem{defi}[teor]{Definition}
\newtheorem{remk}[teor]{Remark}
\theoremstyle{definition}
\newtheorem*{Acknow}{Acknowledgments}
\begin{document}

 \title{Subspaces of a para-quaternionic Hermitian vector
space}
\thanks{Work done under the programs of GNSAGA-INDAM of C.N.R. and PRIN07 "Riemannian metrics and differentiable structures" of MIUR (italy)}

\maketitle

\centerline{MASSIMO VACCARO}
\vskip 0.2cm
\centerline{\em{Dipartimento dell'Ingegneria di Informazione e Matematica Applicata}}
\centerline{ \em{Universit\a' di Salerno, 84084 - Fisciano (SA) , Italy}}
\centerline{\em{massimo\_vaccaro@libero.it}}


\vskip 0.2cm

\textbf{Abstract:} Let $(\tilde Q,g) $ be a para-quaternionic Hermitian structure on the real vector space $V$.
By referring to the tensorial presentation  $(V, \tilde{Q},g) \simeq
(H^2 \otimes  E^{2n}, \mathfrak{sl}(H),\omega^H \otimes \omega^E)$,
we give an explicit description, from an affine and metric point of view,
of main classes of subspaces of
$V$ which are invariantly defined  with respect to the structure group of $\tilde{Q}$  and $(\tilde{Q},g)$ respectively.

\vskip 0.2cm

{\em keywords:} Para-quaternionic Hermitian structure,  complex subspaces, para-complex subspaces, totally real subspaces.

\vskip 0.1cm

Mathematics Subject Classification: 53C15,15A69, 53C80

\section{Introduction} In the last years, quaternionic-like structures have captured an  increasing interest both in  mathematics and
physics. In particular,  many  recent researches have focused on  para-quaternionic structures which are the object of this article
from a basic geometrical point of view.

Para-geometries play an important role in physics in some supersymmetric theories.
For instance in \cite{CMMS1}, \cite{CMMS2}, it was shown   that the target space for scalar fields in 4-dimensional
Euclidean N = 2 supersymmetry carries a special para-Kaehler structure similar to
the special Kaehler structure which arises on the target space of scalar fields for
N = 2 Lorentzian 4-dimensional supersymmetry.
Also,  in \cite{M}, where   the role of special geometry in the theory of supersymmetric black holes is explained,
the  target metric is   (Riemannian) quaternionic Kaehler or (neutral) para-quaternionic Kaehler according if  the space-time signature of the metric is Lorentzian  or Euclidean respectively.

In this article,  which is the first part of a research project about submanifolds of a para-quaternionic Kaehler manifold,
we deal with special subspaces of a para-quaternionic Hermitian vector space. A brief description of the results  is given below.


Let $V$ be a real vector space  endowed
with a para-quaternionic structure $\tilde Q \subset End(V)$, i.e.
$\tilde Q$ is isomorphic to $Im \tilde \HH$ where $\tilde \HH$ is the  Clifford algebra of para-quaternions.
It is known that  $\dim V= 2n$ and one has an isomorphism $(V, \tilde Q) \simeq (H^2 \otimes E^n,
\mathfrak{sl}(H))$, where $H$ and $E$ are real vector spaces and
 $\mathfrak{sl}(H)$ is the Lie algebra of the
special linear group $SL(H)$.

Here we consider  a  \textit{para-quaternionic Hermitian}  vector space $(V, \tilde{Q},g)$,
where $g$ ia a  $\tilde Q$-hermitian metric on $V$. In this case
the compatibility conditions imply that $\dim V=4n$ and that  $g$ is  pseudo-Euclidean  of (neutral)  signature
$(2n,2n)$. Moreover one has an isomorphism  $(V^{4n}, \tilde{Q},g)
\simeq (H^2 \otimes  E^{2n}, \mathfrak{sl}(H)),\omega^H \otimes
\omega^E)$ where $\omega^H$ and $\omega^E$ are two symplectic
forms on $H$ and $E$ respectively.

 The para-quaternionic Hermitian structure  naturally defines some  classes of \textit{special subspaces} of $V$ in terms of their
  behaviour with respect
 to the endomorphisms of  $\tilde Q$ and to the metric $g$, which are interesting to consider:
 \textbf{para-quaternionic}, \textbf{complex}, \textbf{totally
complex}, \textbf{weakly para-complex}, \textbf{totally
para-complex}, \textbf{nilpotent},  \textbf{real}, \textbf{totally real}.

Also by referring to the tensorial presentation of a para-quaternionic Hermitian vector space
 there are some  classes of subspaces of $V$ which is natural to consider: first of all the \textbf{product} subspaces  and among them the \textbf{decomposable} subspaces,
 furthermore   some   $\bf {U^{F,T}}$
subspaces which we defined as depending on a symplectic basis of $H$, on a subspace $F \subset E$ and on a linear map $T:F \rightarrow E$  (see def.(\ref{spazi U^{F,T}})).
Indeed a generic  subspace in $V$ is not a ${U^{F,T}}$ subspace, but it turns out that any $U \subset V$
admits a decomposition into a pair of such subspaces (prop. \ref{decomposition of a subspace}).

The main purpose of this article is to  give an explicit  description of the
special subspaces of the para-quaternionic Hermitian space $(V, \tilde Q,g)$ in terms of the tensorial presentation $(H^2 \otimes  E^{2n}, \mathfrak{sl}(H)),\omega^H \otimes
\omega^E)$.
After proving that the \textit{para-quaternionic subspaces coincide with the products $H \otimes E', \; E' \subseteq E$} (prop. \ref{para-quaternionic subspaces proposition}), the  basic  tool consists in  restricting to \textbf{pure} subspaces,
 not containing any non trivial  para-quaternionic subspace, and by showing that \textit{pure special subspaces are $U^{F,T}$ subspaces}.
Viceversa we also give the precise \textit{conditions for a $U^{F,T}$ subspace to be a special subspace of any given type}.

 This presentation is also useful from the metrical point of view to determine, for each  subspace, the signature of the
induced metric. We  give then the conditions for the above special
subspaces to be  $g$-non degenerate.

Finally we notice that the results obtained with regard to the tensorial presentation
of the geometry of a para-quaternionic
Hermitian vector space can be extended to the quaternionic case due to the fact that
the complexification of a quaternionic Hermitian vector space $V^{4n}$ has a natural
identification with the tensor product $H \otimes E$ of two complex vector spaces
of dimension 2 and 2n respectively (\cite{S1},\cite{S2}).

This work develops and completes  a research undertaken during  the Ph.D. thesis whose  advisor was professor Dmitri Alekseevsky.

\section{Preliminaries}


\begin{defi}
Let $V$ be a real vector space of dimension $n$ and  $K \in End
(V)$ such that $K^2=Id$. Let denote $V_K^+$ and $V_K^-$  the +1 and -1
eigenspaces of $K$. Then $K$ is called a \textbf{product structure
on $V$} if $\dim V_K^+,\, \dim V_K^- > 0$. A \textbf{para-complex
structure on $V$} is a product structure with $\dim V_K^+ = \dim V_K^-$.

 A triple
$(J_1,J_2,J_3)$ of anticommuting endomorphisms of $V$ satisfying
the relations:
\begin{equation} \label{eight}
-J_1^2=J_2^2= J_3^2 =Id , \quad J_1 J_2 = J_3
\end{equation}
 is called a \textbf{para-hypercomplex structure} on $V$ \footnote{
Observe that  $J_2$ and $J_3$ are para-complex structures on $V$. In
fact, since the complex structure $J_1$   anti-commute with $J_2\,$, $J_1 (V_{J_2}^+) \subseteq V_{J_2}^-$
and $J_1 (V_{J_2}^-) \subseteq V_{J_2}^+$, which implies $\dim V_{J_2}^+ = \dim
V_{J_2}^-$, and analogously for $J_3$.}. A Lie subalgebra $\widetilde{Q} \subset \mathfrak{gl}(V)$
is called a \textbf{para-quaternionic structure} on $V$ if it admits
 a basis $(J_1,J_2, J_3)$ satisfying the relations
(\ref{eight}). Such a para-hypercomplex structure
is called  an admissible basis of
$\widetilde{Q}$.
\end{defi}

 A para-hypercomplex structure $(J_1,J_2,J_3)$
defines on $V$ the structure of a left module over the real algebra $\widetilde\HH$ of para-quaternions   which is the real
algebra generated by unity 1 and $i, j, k$ satisfying
\begin{equation} \label{one}
-i^2 = j^2 = k^2= 1, \quad ij = -ji = k.
\end{equation}

$\widetilde\HH$ is  isomorphic, as real pseudo-normed
algebra, to the algebra $ Mat_2(\R)$ of real $(2 \times
2)$-matrices, the isomorphism being given by
\begin{equation} \label{isomorphism H with 2
x 2 matrices} \Phi : \textbf{q}=q_0 + q_1 i  + q_2 j+ q_3
k \mapsto \left( \begin{array}{cc}  q_0 - q_3
& q_2 - q_1\\
q_2 + q_1 & q_0 + q_3
\end{array}
 \right)
\end{equation} where $\mathcal{N}(q):=q \bar{q}= q_0^2 +q_1^2-q_2^2-q_3^2= det(\Phi(\textbf{q}))$.

 The \textbf{standard
para-hypercomplex structure $(\mathcal{I}, \mathcal{J},
\mathcal{K})$ of $V= \R^2$} is represented, in the canonical basis, by
\begin{equation} \label{standard para-hypercomplex structure of R^2}
\mathcal{I}=\left(
\begin{array}{cc}
0 & -1 \\ 1 & 0
\end{array}
\right), \qquad \mathcal{J}=\left(
\begin{array}{cc}
0 & 1 \\ 1 & 0
\end{array}
\right), \qquad \mathcal{K}=\left(
\begin{array}{cc}
-1 & 0 \\ 0 & 1
\end{array}
\right).
\end{equation}
Observe that $<\mathcal{I}, \mathcal{J}, \mathcal{K}>_\R \simeq
\mathfrak{sl}_2(\R)$ the matrix Lie algebra of zero
trace real $(2 \times 2)$-matrices  of the unimodular Lie group $SL_2(R)$.

Generalizing, we define the \textbf{standard para-hypercomplex
structure $\widetilde{H}=(I,J,K)$ of $R^{2n}= \R^2 \oplus \ldots \oplus \R^2$} represented, in the
canonical basis, by
\begin{equation} \label{standard para-hypercomplex structure of R^2n}
I=\mathcal{I} \oplus \mathcal{I} \oplus \ldots \oplus \mathcal{I};
\quad J=\mathcal{J} \oplus \mathcal{J} \oplus \ldots \oplus
\mathcal{J}; \quad K=\mathcal{K} \oplus \mathcal{K} \oplus \ldots
\oplus \mathcal{K}.
\end{equation}

By the  identification $\widetilde\HH \cong Mat_2(\R)$ of
(\ref{isomorphism H with 2 x 2 matrices})  and from Wedderburn
theorem, stating that every representation of a unitary,
associative, semisimple algebras  is direct sum of standard
representations, it results the following
\begin{prop} \label{unique irreducible 2-dimensional H-module}
$ $
\begin{itemize}
\item There exists a unique, up to isomorphism,
irreducible $\widetilde\HH$-module $H^2 \simeq \R^2$.
\item Every
$\widetilde\HH$-module $V^{2n}$ is reducible as a direct sum
 $V=H^2 \oplus  \ldots \oplus H^2$.
\end{itemize}
\end{prop}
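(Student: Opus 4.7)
The plan is to transport the statement across the isomorphism $\widetilde{\HH} \cong \mathrm{Mat}_2(\R)$ from (\ref{isomorphism H with 2 x 2 matrices}) and then apply the structure theory of matrix algebras, as the paper already hints by invoking Wedderburn. So I will view any $\widetilde{\HH}$-module $V$ as a left $\mathrm{Mat}_2(\R)$-module.

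For the first bullet, I would exhibit the candidate irreducible module explicitly as $H^2 := \R^2$ with the tautological action of $\mathrm{Mat}_2(\R)$. Irreducibility is immediate: any nonzero $v \in \R^2$ is cyclic because $\mathrm{Mat}_2(\R)\cdot v = \R^2$ (one can send $v$ to any target vector by choosing an appropriate matrix). For uniqueness, I would pick the rank-one idempotent $e = E_{11}\in \mathrm{Mat}_2(\R)$ and, given an arbitrary nonzero irreducible $\widetilde{\HH}$-module $M$, argue that $eM\neq 0$ (otherwise the two-sided ideal annihilating $M$ would be nontrivial, contradicting simplicity of $\mathrm{Mat}_2(\R)$). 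Choosing any $0\neq m\in eM$, the map $\mathrm{Mat}_2(\R)\to M$, $a\mapsto am$, factors through the left ideal $\mathrm{Mat}_2(\R)\cdot e$, which is the first-column subspace and hence isomorphic to $\R^2 = H^2$ as a left module; by irreducibility of both source and target the induced map is an isomorphism.

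For the second bullet, I would apply semisimplicity of $\mathrm{Mat}_2(\R)$: as a semisimple associative $\R$-algebra (no proper nonzero two-sided ideals, hence trivially a product of simple factors in Wedderburn's theorem), every finite-dimensional left module is completely reducible. Combined with the uniqueness established above, every such module splits as a direct sum of copies of $H^2$. The dimension count $\dim_\R V = 2n$ then follows automatically from counting the $H^2$-summands, which forces the parity.

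I do not anticipate a genuine obstacle: once the identification with $\mathrm{Mat}_2(\R)$ is in place, both claims are textbook consequences of Wedderburn--Artin, so the only care needed is in presenting the isomorphism $\mathrm{Mat}_2(\R)\cdot e\simeq \R^2$ cleanly and in being explicit that the ambient base field is $\R$ (so that the division ring appearing in Wedderburn is $\R$ itself, which is what pins down the dimension of the unique irreducible to $2$).
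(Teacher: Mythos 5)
Your proposal is correct and follows essentially the same route as the paper, which likewise identifies $\widetilde\HH$ with $\mathrm{Mat}_2(\R)$ via (\ref{isomorphism H with 2 x 2 matrices}) and then simply invokes Wedderburn's theorem. You merely spell out the standard details (the column left ideal, the idempotent argument, complete reducibility) that the paper leaves implicit.
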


Note that to have a direct sum decomposition of the $\widetilde\HH$-module
$(V^{2n},(I,J,K))$,  into invariant 2-dimensional
subspaces $U_1, \ldots, U_n$, one considers a basis ${e_i^+}$ of
$V_J^+$, eigenspace of the para-complex structure $J$ associated
to the eigenvalue 1 (then $Ke_1^+, \ldots, Ke_n^+$ is a basis of $V_J^-$).
The 2-dimensional subspaces
 \begin{equation} \label{invariant 2-dimensional subspace in pqH vector space}
 U_i=<e_i^+, Ke_i^+>, \qquad i=1,\ldots,n
 \end{equation}
 are clearly $\widetilde{H}$-invariant, irreducible and isomorphic as
 $\widetilde\HH$-modules.
Choosing the  basis $<e_i^+ - Ke_i^+, e_i^+ + Ke_i^+ >$ in each
$U_i$, $\widetilde{H}$
 corresponds to the standard para-hypercomplex structure of
$R^{2n}$ given in (\ref{standard para-hypercomplex structure of
R^2n}).

\vskip 0.3cm

Let $H^2$ and $E^n$ be real vector spaces. For any fixed basis $(h_1,
h_2)$ of $H$, one has the identification $H \simeq \R^2$: we define a corresponding \textbf{standard para-hypercomplex
structure on $H^2 \otimes E^n$}  by
\begin{equation} \label{standard para_hypercomplex structure}
I=I(h \otimes e)=\mathcal{I} h \otimes e, \quad J=J(h \otimes e)=\mathcal{J} h \otimes e,
\quad K=K(h \otimes e)=\mathcal{K} h \otimes e
\end{equation}
 with $\mathcal{I},\mathcal{J},\mathcal{K}$ given in (\ref{standard para-hypercomplex structure of R^2}) and the
\textbf{standard para-quaternionic structure $\mathfrak{sl}_2(\R)
\otimes Id$ on $H^2 \otimes E^n$} generated by any standard
para-hypercomplex structure.

Since  $\mathfrak{sl}_2(\R) \simeq \mathfrak{sl}(H)$,
the Lie algebra of the  Lie group $SL(H)$ of unimodular transformations of $H$,
 we will use the equivalent notations
\[\mathfrak{sl}_2(\R) \otimes Id \simeq \mathfrak{sl}_2(\R)  \simeq \mathfrak{sl}(H).\]

From prop. (\ref{unique irreducible 2-dimensional H-module}) it follows immediately
\begin{prop} \label{isomorphism between para-quaternionic space and tensor product H x E and sl(H)}
Any vector space $V^{2n}$ with a para-hypercomplex structure $\tilde
H$ is isomorphic to $H^2 \otimes E^n$ with a standard
para-hypercomplex structure.  Consequently any para-quaternionic
vector space $(V^{2n},\tilde Q)$ is isomorphic to $(H^2 \otimes
E^n, \mathfrak{sl}(H))$.
\end{prop}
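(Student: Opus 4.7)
The plan is to obtain the first assertion by combining Proposition \ref{unique irreducible 2-dimensional H-module} with the explicit splitting sketched right after it, and then deduce the para-quaternionic statement by noting that $\mathfrak{sl}(H)$ is exactly the Lie algebra generated by any standard admissible basis.

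First I would fix an admissible basis $(I,J,K)$ of $\tilde H$, so that $J$ is a para-complex structure on $V$ and $K$ anti-commutes with $J$, hence interchanges $V_J^+$ and $V_J^-$ (as noted in the footnote, forcing $\dim V_J^+=\dim V_J^-=n$). Pick a basis $e_1^+,\dots,e_n^+$ of $V_J^+$ and set $U_i=\langle e_i^+,Ke_i^+\rangle$ as in (\ref{invariant 2-dimensional subspace in pqH vector space}). Each $U_i$ is $\tilde H$-invariant and two-dimensional, hence, by Proposition \ref{unique irreducible 2-dimensional H-module}, isomorphic to the unique irreducible $\widetilde{\HH}$-module $H^2$; moreover $V=\bigoplus_{i=1}^n U_i$ since $(e_i^+,Ke_i^+)_{i=1,\dots,n}$ is a basis of $V$.

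Next I would define the linear map $\Phi : H^2\otimes E^n \to V$ by choosing a basis $(h_1,h_2)$ of $H\simeq\mathbb R^2$ and a basis $(\varepsilon_1,\dots,\varepsilon_n)$ of $E$ and setting
\[
\Phi(h_1\otimes \varepsilon_i)=e_i^+-Ke_i^+,\qquad \Phi(h_2\otimes \varepsilon_i)=e_i^++Ke_i^+,
\]
i.e.\ the basis of $U_i$ in which, by the observation following (\ref{invariant 2-dimensional subspace in pqH vector space}), $(I,J,K)$ reduces to the standard form $(\mathcal I,\mathcal J,\mathcal K)$ of (\ref{standard para-hypercomplex structure of R^2}). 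By construction $\Phi$ is a linear isomorphism, and the verification that it intertwines $(I,J,K)$ with the standard structure defined in (\ref{standard para_hypercomplex structure}) reduces to the $2\times 2$ matrix identities (\ref{standard para-hypercomplex structure of R^2}), which is immediate on each summand $U_i$.

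For the second, para-quaternionic assertion, it suffices to observe that $\tilde Q$ is by definition the real span (equivalently, the Lie subalgebra generated) of an admissible basis $(I,J,K)$. Since $\Phi$ carries $(I,J,K)$ to the standard para-hypercomplex structure on $H^2\otimes E^n$, it carries $\tilde Q=\langle I,J,K\rangle_{\mathbb R}$ to $\langle\mathcal I,\mathcal J,\mathcal K\rangle_{\mathbb R}\otimes\mathrm{Id}=\mathfrak{sl}_2(\mathbb R)\otimes\mathrm{Id}\simeq \mathfrak{sl}(H)$, proving the isomorphism $(V,\tilde Q)\simeq(H^2\otimes E^n,\mathfrak{sl}(H))$. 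The only point requiring some care is the choice of the splitting: the space $V_J^+$ and the operator $K$ depend on the admissible basis chosen, but different admissible bases yield isomorphic tensorial presentations, so no independence check is needed for this statement. I do not foresee any serious obstacle; the content is essentially a repackaging of Proposition \ref{unique irreducible 2-dimensional H-module} together with the explicit normal form of $(I,J,K)$ on each irreducible summand.
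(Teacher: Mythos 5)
Your proposal is correct and follows essentially the same route as the paper, which states the result "follows immediately" from Proposition \ref{unique irreducible 2-dimensional H-module} together with the explicit splitting $V=\bigoplus U_i$, $U_i=\langle e_i^+,Ke_i^+\rangle$, and the basis $(e_i^+-Ke_i^+,\,e_i^++Ke_i^+)$ described just before it. You have merely written out the isomorphism $\Phi$ and the intertwining check that the paper leaves implicit.
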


\vskip 0.2cm

\begin{defi}
Let $(V,K)$ be a $2n$-dimensional para-complex vector space. A
pseudo-Euclidean scalar product $g=<\cdot,\cdot>$ on $(V,K)$  is
called $K-$\textbf{Hermitian}  if $K$ is a skew-symmetric
endomorphism of \mbox{$(V,<\cdot,\cdot>$)}.

A vector space $V$ endowed with a para-complex structure $K$
 and a $K$-Hermitian scalar product $g$ is called a \textbf{para-Hermitian vector space
$(V, K, g)$}\footnote{The reason why we do not consider $n$-dimensional  vector spaces endowed with a product structure not para-complex is that
the  metric on such spaces, direct sum of a pair of totally isotropic eigenspaces of different dimensions,  is always degenerate.}.
\label{footnote about metric on product structure subspaces}

 A para-hypercomplex structure $(J_1,J_2,J_3)$
 on $V$ is called \textbf{para-hypercomplex Hermitian structure} with respect to the pseudo-Euclidean scalar product $g$
 if its endomorphisms are skew-symmetric with respect to $g$.

 A para-quaternionic structure $\widetilde{Q}$ on $V$ is called
a \textbf{para-quaternionic Hermitian structure} with respect to $g$
if some (and hence any) admissible basis is Hermitian with respect
to $g$.
\end{defi}

The eigenspaces associated to any para-complex structure are clearly
totally isotropic, then  a \textit{para-hermitian metric has   neutral signature}
and this leads to the 

\begin{prop} \label{dimension 4n of a Hermitian para-quaternionic vector space}
The dimension of a vector space $V^{2n}$, endowed with a
para-hypercomplex (resp. para-quaternionic) Hermitian
 structure $(\widetilde{H},g)$  (resp.$(\widetilde{Q},g)$), is a multiple of 4.
\end{prop}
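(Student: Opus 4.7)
The plan is to exploit the fact that $J_1$ is a genuine complex structure on $V$ (since $J_1^2 = -Id$) while the skew-symmetry of $J_1$ with respect to $g$ makes $(V,J_1,g)$ into a pseudo-Hermitian complex vector space. A standard linear-algebra fact then forces the real signature of $g$ to have the form $(2p,2q)$, which combined with the neutrality of $g$ noted in the paragraph preceding the proposition will give the conclusion.

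First I would observe that skew-symmetry of $J_1$ and $J_1^2=-Id$ yield $g(J_1 x,J_1 y)=-g(x,J_1^2 y)=g(x,y)$, so $g$ is $J_1$-invariant. Regarding $(V,J_1)$ as a complex vector space of complex dimension $n$ (where $\dim_{\R}V=2n$), one defines the complex Hermitian form
\[
h(x,y):=g(x,y)-i\,g(J_1 x,y),
\]
and checks, using $J_1^2=-Id$ and the skew-symmetry of $J_1$, that $h$ is $\C$-linear in the first argument, $\C$-antilinear in the second, and satisfies $\overline{h(y,x)}=h(x,y)$.

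The key numerical remark is that $h(x,x)=g(x,x)$ for every $x\in V$. Diagonalising $h$ over $\C$ to signature $(p,q)$, $p+q=n$, one obtains a $\C$-basis $v_1,\dots,v_n$ with $h(v_k,v_k)=\pm 1$; then $v_1,J_1 v_1,\dots,v_n,J_1 v_n$ is a real $g$-orthogonal basis, and $J_1$-invariance of $g$ gives $g(J_1 v_k,J_1 v_k)=g(v_k,v_k)=h(v_k,v_k)$. Hence the real signature of $g$ is $(2p,2q)$. But the observation preceding the proposition shows that $g$ is neutral, i.e.\ of signature $(n,n)$ on $V^{2n}$, so $2p=2q=n$. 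Therefore $n$ is even and $\dim V=2n$ is a multiple of $4$.

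For the para-quaternionic Hermitian case, the definition requires that \emph{some} (and hence any) admissible basis of $\widetilde Q$ be Hermitian; fixing such a basis $(J_1,J_2,J_3)$ equips $V$ with a para-hypercomplex Hermitian structure, to which the previous argument applies verbatim. The only real obstacle is bookkeeping signs in the verification that $h$ is Hermitian, but no deeper ingredient is needed beyond the (now classical) fact that a $J$-invariant pseudo-Euclidean metric on a complex vector space has real signature of the form $(2p,2q)$.
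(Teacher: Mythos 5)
Your proof is correct and takes essentially the same route the paper intends: the para-complex structures force $g$ to be neutral of signature $(n,n)$, the compatible complex structure $J_1$ forces the signature to be of the form $(2p,2q)$, and combining the two gives $n=2p$, hence divisibility by $4$. The paper compresses all of this into the one-sentence remark preceding the proposition; your version merely supplies the details it leaves implicit.
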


\vskip 0.2cm

Let consider now  the standard para-quaternionic vector space
$(H^2 \otimes E^{2n}, \mathfrak{sl}(H))$. Let $\omega^E$ be a
symplectic form on $E$ and $\omega^H=h_1^* \, \wedge \, h_2^*$ a volume form on $H$.


\textit{The 2-form $g_0=\omega^H \otimes \omega^E$ is a pseudo-Euclidean  $\tilde Q$-Hermitian metric on  $H^2 \otimes E^{2n}$}.
It is trivial to verify that $g_0$ is bilinear, symmetric and non degenerate. Moreover,
computing  on decomposable
vectors,  $\forall A \in \mathfrak{sl}(H)$ one has
 \[g(Ah' \otimes e, \tilde{h} \otimes e')=
\omega^H(Ah',\tilde{h})
 \omega^E(e,e')= -\omega^H(h',A\tilde{h})
 \omega^E(e,e')= -g(h' \otimes e, A \tilde{h} \otimes e').\]

\begin{defi}
The $4n$-dimensional space
\mbox{$(H^2 \otimes E^{2n}, \mathfrak{sl}(H),\omega^H \otimes
\omega^E)$}
is a \textbf{standard para-quaternionic Hermitian space}.\\
\end{defi}

\begin{prop}
Let $V^{4n}$ be a vector space with a para-quaternionic Hermitian
structure  $(\widetilde{Q},g)$. Then  the para-quaternionic
Hermitian space   $(V,\widetilde{Q},g)$ is isomorphic to a
standard para-quaternionic Hermitian space.
\end{prop}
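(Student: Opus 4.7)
\noindent\emph{Proof plan.}
My plan is to first move to the tensorial model via Proposition~\ref{isomorphism between para-quaternionic space and tensor product H x E and sl(H)}, then recognise the transported metric as a tensor product of symplectic forms by exploiting how $g$ interacts with the eigenspace decomposition of $J$.

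More concretely, Proposition~\ref{isomorphism between para-quaternionic space and tensor product H x E and sl(H)} provides a $\widetilde{Q}$-equivariant isomorphism $\phi:(V,\widetilde{Q})\to(H^2\otimes E^{2n},\mathfrak{sl}(H))$, where Proposition~\ref{dimension 4n of a Hermitian para-quaternionic vector space} forces $\dim E = 2n$. Transporting $g$ along $\phi$, it suffices to show that any $\widetilde{Q}$-Hermitian pseudo-Euclidean metric on $H\otimes E$ has the form $\omega^H\otimes\omega^E$ for suitable symplectic forms. Fix a standard admissible basis $(I,J,K)$ as in (\ref{standard para_hypercomplex structure}) and basis vectors $h_\pm\in H^{\pm}_J$, so that $V^{\pm}_J = h_\pm \otimes E$. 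Because $J$ is $g$-skew with $J^2 = Id$, both $V^{\pm}_J$ are totally isotropic ($g(v,w)=g(Jv,w)=-g(v,Jw)=-g(v,w)$ for $v,w\in V^+_J$); consequently $g$ is entirely encoded by the bilinear form
\[
B(e,e'):=g(h_+\otimes e,\, h_-\otimes e')
\]
on $E$, which inherits nondegeneracy from $g$.

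The one genuinely delicate step is to show that $B$ is antisymmetric, and this is where the Hermitian datum carried by $I$ (rather than just by $J$) is used. Since $I$ anticommutes with $J$ and satisfies $I^2=-Id$, one has $Ih_+=\lambda h_-$ and $Ih_-=\mu h_+$ with $\lambda\mu=-1$, in particular $\lambda\neq 0$. Applying the $I$-skew-symmetry $g(I\cdot,\cdot)=-g(\cdot,I\cdot)$ to the pair $(h_+\otimes e,\, h_+\otimes e')$ yields $g(h_-\otimes e,\, h_+\otimes e')=-B(e,e')$, while the symmetry of $g$ identifies the same scalar with $B(e',e)$. Hence $B(e',e)=-B(e,e')$, so $\omega^E:=B$ is a symplectic form on $E$.

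Finally, let $\omega^H$ be the volume form on $H$ normalised by $\omega^H(h_+,h_-)=1$. A routine check on decomposable vectors, using total isotropy of $V^{\pm}_J$ to kill the diagonal terms, gives $g=\omega^H\otimes\omega^E$, thus exhibiting $(V,\widetilde{Q},g)$ as a standard para-quaternionic Hermitian space. The main obstacle is the antisymmetry argument for $B$; the remaining steps are essentially bookkeeping about the decomposition $H\otimes E=V^+_J\oplus V^-_J$.
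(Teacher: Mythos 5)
Your proof is correct and follows essentially the same route as the paper: transport the metric to the tensor model via Proposition~\ref{isomorphism between para-quaternionic space and tensor product H x E and sl(H)}, observe that the decomposable subspaces $h\otimes E$ are totally isotropic, and extract a symplectic form $\omega^E$ from the remaining cross pairing so that $g=\omega^H\otimes\omega^E$. The only difference is presentational — you organise the argument around the $J$-eigenspace splitting $V=V^+_J\oplus V^-_J$ and spell out, via the $I$-skew-symmetry, the antisymmetry of $B$ that the paper dismisses as ``straightforward to prove''.
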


\begin{proof}
By proposition (\ref{isomorphism between para-quaternionic space and tensor
product H x E and sl(H)}) we identify
$(V^{4n},\widetilde{Q}) \simeq (H^2 \otimes E^{2n},
\mathfrak{sl}(H))$.

Then the  given para-quaternionic Hermitian metric $g$ on  $H^2 \otimes E^{2n}$
can be written   $g=\omega^H \otimes \omega^E$ where
 $\omega^H= h_1^* \wedge h_2^*$ is the standard volume form on $H$ and  $\omega^E$ is defined by
\[ \omega^E(e,e'):=
\frac{g(h \otimes e, h' \otimes e')}{\omega^H(h,h')},
\]
for one (and hence any) pair of linearly independent vectors $h,h'$. It is straightforward to prove that the right member is well defined by observing that, by Hermitianicity, decomposable vectors are always isotropic and $g(h_1 \otimes e, h_2 \otimes e')+g(h_2 \otimes e, h_1 \otimes e')=0$. 
Moreover $\omega^E$ is clearly symplectic.
\end{proof}
Finally, let make the following remark that we will use in next section.
As  an  $\widetilde\HH$-module,  on a  para-hypercomplex
Hermitian vector space $(V^{4n},\{I,J,K\},g)$  we define the
($\widetilde\HH$-valued)-Hermitian  product $(\cdot)=(\cdot)_{\{I,J,K\}}$  by:
\begin{equation} \label{Hermitian scalar product on a PQH vector space}
\begin{array}{llll}
( \cdot ) : & V \times V & \rightarrow &
\tilde \HH \\
 & (X,Y) & \mapsto &  X \cdot Y = g(X,Y) + i g(X,IY)- j g(X,JY)- k g(X,KY). \\
\end{array}
\end{equation}

When considering a para-quaternionic Hermitian vector space $(V,\widetilde{Q},g)$ , we observe that the
($\widetilde\HH$-valued)-Hermitian  product defined in (\ref{Hermitian scalar product on a PQH vector space})
depends on the chosen admissible basis $\{I,J,K\} \in \widetilde{Q}$. Two Hermitian products
$( \; \cdot \; )_{\{I,J,K\}}$, $ ( \; \cdot \; )_{\{I',J',K'\}}$,
 referred to different admissible basis, are related by an inner automorphism of $\widetilde\HH$. This implies that
\[\mathcal{N}(Im (X \cdot Y)_{\{I,J,K\}})=\mathcal{N}(Im (X \cdot Y)_{\{I',J',K'\}}), \quad \forall X,Y \in V\]
since the  real part of the norm $\mathcal{N}((X \cdot Y))$ is independent on the  basis $\{I,J,K\}$.

\section{Subspaces of a para-quaternionic Hermitian
vector space}
 From now on, the para-quaternionic Hermitian vector space
 $(V^{4n},\widetilde{Q},g)$  will be the standard para-quaternionic Hermitian vector space
 $(H^2 \otimes E^{2n}, \mathfrak{sl}(H), \omega^H \otimes
\omega^E)$.  We recall that, in this case, any para-hypercomplex admissible basis $(I,J,K)$ of
$\widetilde{Q}=\mathfrak{sl}(H)$ is a  standard para-hypercomplex Hermitian  structure which
 corresponds to a symplectic basis
  $(h_1, h_2)$ of $H$ such that
$\omega^H = h_1^* \wedge h_2^*$ and one has
\begin{equation} \label{standard hypercomplex structure}
\begin{array}{ll}
 I(h_1 \otimes e) = h_2 \otimes e; \qquad & I (h_2 \otimes e)
= -h_1 \otimes e;\\
 J (h_1 \otimes e) = h_2 \otimes e; \qquad & J (h_2 \otimes e)
= h_1 \otimes e;\\
 K (h_1 \otimes e) = -h_1 \otimes e; \qquad & K (h_2 \otimes e)
= h_2 \otimes e.\\
\end{array}
\end{equation}

If $A$ and $B$ are subspaces in $E$,  in the following we will denote
$\omega^E(A \times B)$  the restriction of the symplectic
form $\omega^E$ to the subspace $A \times B$ of $E \times E$.
 Moreover, by
saying that  $\omega^E(A \times B)$ is degenerate, we will mean
that  there exists $0 \neq b_0 \in B$ such that $\omega^E(a,b_0)=0, \,
\forall a \in A$ i.e. \[\ker  \, \omega^E(A \times B)= \{b \in B
\; | \; \omega^E(a,b)= 0, \; \forall a \in A \} \neq \{0\}.\] In
the following we will give definitions and explicit descriptions
of some relevant classes of subspaces in $V$.  The definitions of the subspaces that we consider involve only the para-quaternionic structure $\widetilde Q$,
except for totally complex, totally para-complex and totally  real subspaces for which we  take into account also the Hermitian metric $g$.


\subsection{Special subspaces of $\bf V=H \otimes E$.}

Fixed a symplectic basis $(h_1,h_2)$ of $H$, any $X \in H \otimes E$ can be written in a unique way
\begin{equation}
 X= h_1 \otimes e + h_2 \otimes e', \qquad  e,e' \in E.
\end{equation}
Let denote by $p_i: H \otimes E \rightarrow E, \; i=1,2$, the natural linear projections defined by
\begin{equation} \label{p1 and p2 maps}
 p_1(X)=e; \quad p_2(X)=e'.
\end{equation}
If $U$ is a subspace then  $p_1(U)=E_1$, $p_2(U)=E_2$ are subspaces of $E$, depending  on the chosen symplectic basis $(h_1,h_2)$ in $H$.
Notice that the sum $p_1(U) + p_2(U)$ is invariant.



With respect  to the tensor product  structure, the following subspaces of $V$ can
 be defined. First of all there are the \textbf{product subspaces}
 $H' \otimes E'$, with $H' \subseteq H$ and $E' \subseteq E$ any given subspaces.
  Referring to the dimension of the non trivial  factor in $H$, only two classes of
  such subspaces are to be considered.

A non zero product subspace $U=h \otimes E' \subset H \otimes E$ where $h$ is a fixed  element in $H$
and $ E' \subset E$ a subspace, will be called a \textbf{decomposable subspace} (meaning that all
elements in $U$ are decomposable vectors; subspaces $H \otimes e', e' \in E$  will not be considered under such
terminology). W.r.t. the  metric $g$, any decomposable subspace is totally isotropic.



We introduce another  important family of subspaces that we denote by $U^{F,T}$.
\begin{defi}\label{spazi U^{F,T}}
Let  $(h_1,h_2)$ be a symplectic basis of $H$,  $F \subseteq E$ a
subspace, $T: F \rightarrow E$ a linear map. We define the
following subspace of $H \otimes E$
\begin{equation} \label{U_FT form}
U^{F,T}:= \{h_1 \otimes f + h_2 \otimes Tf, \; f \in F\}.
\end{equation}
Note that the map
\begin{equation} \label{isomorphism U_F}
\begin{array}{lll}
\phi: & F \rightarrow  & U^{F,T} \\
  & f  \mapsto & h_1 \otimes f + h_2 \otimes Tf
\end{array}
\end{equation}
is an isomorphism of real vector spaces.
By saying that a subspace $U \subset H \otimes E$ is a \textbf{$ \bf U^{F,T}$ subspace}, we will mean that   it admits the form
 (\ref{U_FT form}) w.r.t. some symplectic basis $(h_1,h_2)$ of $H$.
\end{defi}

As a first  example  of subspaces  admitting the $U^{F,T}$ form we
have the decom\-po\-sa\-ble subspaces $U=h \otimes E', \; h \in H, \; E'
\subseteq E$: in any basis $(h_1=h,h_2)$ let $F= E'$ and $T\equiv
0$. Also, in a basis $(h_1,h_2)$  with $h_1,h_2 \neq h$, one has
$F=p_1(U)=p_2(U)=E'$ and $T= \lambda Id$ where $\lambda=
\frac{\beta}{\alpha}$ for $h = \alpha h_1 + \beta h_2$. On the
other hand,  $U$ does not admit the form (\ref{spazi U^{F,T}})
w.r.t. any basis $(h_1,h_2\equiv  \alpha h), \; \alpha \in \R$.

It is immediate to prove the following
\begin{prop} \label{spazi U(F,L)}
$ $
\begin{itemize}
\item[a)]     A subspace $U$ is a $U^{F,T}$ subspace  iff there exists $h \neq 0$ in $H$  such that $(h \otimes E) \cap U= \{ 0\}$.
\item[b)]   W.r.t. the symplectic basis  $(h_1,h_2)$, the map $T$  for the subspace $U=U^{F,T}$ is injective iff  $(h_i \otimes E) \cap U= \{ 0\}, \; i= 1,2$.
\end{itemize}
\end{prop}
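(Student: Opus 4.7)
The plan is to prove both parts by unpacking the definition of a $U^{F,T}$ subspace and using the natural projections $p_1, p_2$ from $(\ref{p1 and p2 maps})$. The content is essentially linear algebra concerning when one coordinate of a tensor determines the other; the only subtlety is choosing a symplectic partner to a given $h \in H$.

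For part (a), the forward implication is immediate: if $U = U^{F,T}$ with respect to $(h_1,h_2)$, taking $h = h_2$ forces any element of $(h_2 \otimes E) \cap U$ to have the form $h_1 \otimes f + h_2 \otimes Tf$ with zero $h_1$-component, hence $f=0$ and the element is zero. For the converse, given $h \neq 0$ with $(h \otimes E)\cap U = \{0\}$, I would complete $h$ to a basis $(h_1, h_2 := h)$ of $H$, normalised so that it is symplectic (i.e.\ $h_1^*\wedge h_2^* = \omega^H$, which just requires rescaling $h_1$). Setting $F := p_1(U)$, the key observation is that for each $f \in F$ the second component $e'$ in $X = h_1 \otimes f + h_2 \otimes e' \in U$ is \emph{uniquely} determined: two such representations would differ by an element of $U \cap (h_2 \otimes E) = \{0\}$. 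This well-definedness lets me set $Tf := e'$, giving a linear map $T: F \to E$ with $U = U^{F,T}$.

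For part (b), the forward direction uses the $U^{F,T}$ form directly: an element of $(h_1 \otimes E) \cap U$ equals $h_1 \otimes f + h_2 \otimes Tf$ whose $h_2$-component vanishes, forcing $Tf = 0$, hence $f = 0$ by injectivity; and similarly an element of $(h_2 \otimes E)\cap U$ forces $f = 0$ directly from its $h_1$-component, so $Tf = 0$ too. Conversely, if $Tf = 0$ then $h_1 \otimes f \in U \cap (h_1 \otimes E) = \{0\}$, so $f = 0$ and $T$ is injective.

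I do not anticipate any real obstacle: the only point that needs a moment of attention is verifying that one may always rescale the completed basis so as to be symplectic (otherwise the basis $(h_1, h)$ is merely linear and the $\omega^H$ condition in Definition \ref{spazi U^{F,T}} would not be matched). Everything else is a direct manipulation of the defining expression $h_1 \otimes f + h_2 \otimes Tf$ via the projections $p_1, p_2$.
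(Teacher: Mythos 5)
Your proof is correct and is exactly the argument the paper has in mind: the paper states this proposition without proof, calling it ``immediate to prove,'' and your unpacking of the $U^{F,T}$ form via the projections $p_1,p_2$ (including the uniqueness of the $h_2$-component and the rescaling of the completed basis to make it symplectic) is the standard verification. No gaps.
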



Observe that if $U = U^{F,T}$ w.r.t. $h_1,h_2$ and also  $U = U^{F',T'}$
 w.r.t. $h'_1,h'_2$ where
 \[h_1 = \alpha h_1' + \beta h_2', \qquad \mbox{and} \qquad h_2 = \gamma h_1' + \delta h_2' ,\]
 then \[F' = (\alpha Id  + \gamma T)F \qquad \mbox{ and} \qquad   T' = (\alpha Id  + \gamma T)^{-1}(\beta Id + \delta T).\]

\begin{prop} \label{number of decomposable ina a U_FT subspace}
A $U^{F,T}$ subspace can always be written as $U^{F',T'}$ with $T'$ injective by performing a suitable change of basis in $H$.
\end{prop}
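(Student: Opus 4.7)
The plan is to invoke Proposition \ref{spazi U(F,L)}(b): if I can produce a new basis $(h'_1,h'_2)$ of $H$ with $(h'_i \otimes E)\cap U=\{0\}$ for both $i=1,2$, then the associated map $T'$ is automatically injective. So the task reduces to choosing such a basis.

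I would first observe that $h_2$ is already a ``good'' direction in the original basis: if $h_2 \otimes e = h_1\otimes f + h_2\otimes Tf$ for some $f\in F$, then uniqueness of the decomposition $H\otimes E=(h_1\otimes E)\oplus(h_2\otimes E)$ forces $f=0$, and hence $e=Tf=0$. So I keep $h'_2 := h_2$ and look only for a replacement of $h_1$ of the form $h'_1 := h_1 + c\, h_2$ with $c\in\R$; for any $c$, $(h'_1,h'_2)$ is a basis. Expanding $h'_1\otimes e = h_1\otimes e + h_2\otimes ce$ and matching with a generic element of $U$, one sees that $(h'_1\otimes E)\cap U\neq\{0\}$ iff there exists $f\in F\setminus\{0\}$ with $Tf=cf$. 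Hence $c$ must be chosen outside the set
\[
\Sigma := \{\, c\in\R : \exists\, f\in F\setminus\{0\},\; Tf=cf\,\}.
\]

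The main step is to show $\Sigma$ is finite. For this I would fix any linear projection $\pi : E \to F$ (along some chosen complement of $F$ in $E$) and consider the endomorphism $\pi\circ T : F \to F$. If $Tf = cf$ with $0\neq f\in F$, then applying $\pi$ gives $(\pi\circ T)(f)=\pi(cf)=cf$, so $c$ is a real eigenvalue of $\pi\circ T$. Since an endomorphism of the $(\dim F)$-dimensional real vector space $F$ has at most $\dim F$ real eigenvalues, $\Sigma$ is finite. Picking any $c\in\R\setminus\Sigma$ then produces the desired basis; applying the change-of-basis remark preceding the statement gives $U = U^{F',T'}$ with $F'=F$ and $T' = T - c\,\mathrm{Id}_F$, which is injective precisely because $c\notin\Sigma$.

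The one point requiring a little care is recognizing that the obstruction to injectivity can be captured by the single scalar parameter $c$ once $h'_2 = h_2$ is fixed; after that, embedding $\Sigma$ into the finite real spectrum of any $\pi\circ T$ is immediate, and the rest is bookkeeping from the definition of $U^{F,T}$ and Proposition \ref{spazi U(F,L)}.
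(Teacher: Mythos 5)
Your proof is correct and takes essentially the same route as the paper: both identify the failure of injectivity with the set of real ``eigenvalues'' $\{c\in\R : Tf=cf \text{ for some } 0\neq f\in F\}$, show this set is finite (you via the spectrum of the endomorphism $\pi\circ T$ of $F$, the paper via restricting $T$ to the span of the eigenvectors), and then choose a basis of $H$ avoiding the corresponding finitely many directions. Your write-up is in fact more explicit than the paper's, which leaves the final change of basis implicit in ``the conclusion follows.''
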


\begin{proof}
A subspace $U=U^{F,T}= \{h_1 \otimes f + h_2 \otimes Tf, \; f \in F\}$ of dimension $m$ contains at most $m$ distinct non zero
decomposable vectors  $k_i \otimes f_i, \; i= 1, \ldots,t$,  if the $k_i \in H$ are pairwise independent.
In fact,  $ k_i \otimes f_i \in U, \; i=1, \ldots,t$, with $k_i=a_i h_1+b_i h_2$, iff  $f_i \in F$ and $Tf_i =  b_i / a_i  \; f_i= \lambda_i f_i$.
Considering the restriction of $T$ to the subspace $<f_1, \ldots, f_t>$ the conclusion follows.
\end{proof}

Remark that, from the isomorphism   (\ref{isomorphism U_F}), the decomposable subspaces contained in a $U=U^{F.T}$ subspace are direct addends in $U$.


In general, a subspace $U \subset V$ does not admit the form $U^{F,T}$: an example is given by any product subspace $H \otimes E', \; E' \subset E$.
On the other  hand, any subspace can be written as direct sum of some $U^{F,T}$ subspaces. In fact,  we have the following
\begin{prop} \label{decomposition of a subspace}
Any subspace $U$ can be written in the forms
\begin{enumerate}
\item  $(h \otimes F') \oplus U^{F'',T''}$ for some $h \in H$
and  $U^{F'',T''}$ of maximal dimension w.r.t. all subspaces of the form $U^{F,T}$ contained in $U$.

\item $k_1 \otimes F_1 \oplus \ldots \oplus k_s \otimes F_s \oplus U^{\widetilde F,\widetilde T}$
with the $k_i \in H, i=1,\dots,s,$ pairwise independent and  $U^{\widetilde F,\widetilde T}$ of maximal dimension w.r.t. all subspaces of the form $U^{F,T}$ containing  no  decomposable subspace.

\end{enumerate}
\end{prop}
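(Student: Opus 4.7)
For part (1), my plan is to work direction-by-direction in $H$. For each $h \in H \setminus \{0\}$ define $W_h := \{e \in E : h \otimes e \in U\}$, so that $U \cap (h \otimes E) = h \otimes W_h$; choose $h$ minimizing $\dim W_h$ and put $F' := W_h$. Extending $h$ to a basis $(h,h')$ of $H$ and writing each $X \in U$ as $X = h \otimes p_h(X) + h' \otimes p_{h'}(X)$, the kernel of $p_{h'}|_U$ is exactly $h \otimes F'$. Any vector-space complement $C$ of $h \otimes F'$ in $U$ is then carried isomorphically by $p_{h'}$ onto $F'' := p_{h'}(U)$, and its $h$-component becomes a linear function $T''$ of its $h'$-component, exhibiting $C$ as a $U^{F'',T''}$ subspace in the basis $(h',h)$. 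For maximality, any $U^{F,T} \subset U$ admits, by Prop.~\ref{spazi U(F,L)}(a), some $k \neq 0$ with $(k \otimes E) \cap U^{F,T} = 0$; this $U^{F,T}$ is therefore transverse to $k \otimes W_k$ inside $U$, forcing $\dim U^{F,T} + \dim W_k \le \dim U$, and minimality of $\dim W_h$ yields $\dim U^{F,T} \le \dim U - \dim F' = \dim U^{F'',T''}$.

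For part (2), starting from the decomposition of (1), the idea is to extract further decomposable pieces from $U^{F'',T''}$ via an eigenvector analysis. By Prop.~\ref{number of decomposable ina a U_FT subspace} I may assume, after changing basis in $H$, that $T''$ is injective. From the proof of that proposition, a nonzero decomposable vector $k \otimes g \in U^{F'',T''}$ with $k = \alpha h_1 + \beta h_2$, $\alpha \neq 0$, corresponds to an eigenvector $g \in F''$ of $T''$ with eigenvalue $\beta/\alpha$. Let $\mu_1,\dots,\mu_r$ denote the distinct real eigenvalues (finitely many, since their eigenspaces are independent in the finite-dimensional $F''$), $V_{\mu_j} := \{f \in F'' : T''f = \mu_j f\}$, $F^* := \bigoplus_j V_{\mu_j}$, $\tilde F$ any complement of $F^*$ in $F''$, and $\tilde T := T''|_{\tilde F}$. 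Splitting $f = \sum_j f_j^* + \tilde f$ with $f_j^* \in V_{\mu_j}$ and $\tilde f \in \tilde F$ gives
\[
 h_1 \otimes f + h_2 \otimes T''f \;=\; \sum_j (h_1 + \mu_j h_2) \otimes f_j^* \;+\; \bigl(h_1 \otimes \tilde f + h_2 \otimes \tilde T \tilde f\bigr),
\]
so $U^{F'',T''} = \bigoplus_j (h_1 + \mu_j h_2) \otimes V_{\mu_j} \oplus U^{\tilde F, \tilde T}$; and $\tilde T$ has no eigenvector (any such would lie in $\tilde F \cap F^* = 0$), so $U^{\tilde F, \tilde T}$ contains no decomposable subspace. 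Combining with $h \otimes F'$ from part (1), and noting that the directions $h$ and $l_j := h_1 + \mu_j h_2$ are pairwise non-parallel (for distinct $\mu_j$ the $l_j$'s differ by a nonzero multiple of $h_2$, and $h$ is linearly independent of $h'$, hence of every $l_j$), yields $U = k_1 \otimes F_1 \oplus \cdots \oplus k_s \otimes F_s \oplus U^{\tilde F, \tilde T}$ with the $k_i$ pairwise independent.

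The main obstacle I anticipate is the maximality of $\dim U^{\tilde F, \tilde T}$. A careless choice of complement in part (1) can leave $T''$ with many eigenvectors, producing a suboptimal $U^{\tilde F, \tilde T}$. To handle this, I plan either (a) to refine the choice of complement in part (1) so that it is itself a maximal no-decomposable $U^{F,T}$-subspace of $U$ (equivalently, so that $T''$ has minimal eigenvector content), or (b) to reverse the construction: fix first a maximal no-decomposable $U^{\tilde F, \tilde T} \subset U$, and then show, using Prop.~\ref{spazi U(F,L)}(a) together with a transversality argument on decomposable vectors in $U$, that some complement of $U^{\tilde F, \tilde T}$ in $U$ admits the required form $\bigoplus_i k_i \otimes F_i$ with pairwise independent $k_i$.
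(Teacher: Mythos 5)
Your part (1) is correct and follows the paper's line (choose the direction $h$ for which the maximal decomposable subspace $h\otimes W_h\subset U$ has minimal dimension, and take a complement); in fact you supply more than the paper, which merely asserts that such a complement is ``clearly'' a $U^{F,T}$ subspace of maximal dimension, whereas your transversality count ($U^{F,T}\cap(k\otimes W_k)=\{0\}$ forces $\dim U^{F,T}\le\dim U-\dim W_k\le\dim U-\dim W_h$) actually proves the maximality. One small looseness: the pairwise independence of $h$ and the directions $l_j$ in part (2) is best justified intrinsically — no decomposable direction of $C$ can be parallel to $h$ because $(h\otimes E)\cap C=\{0\}$ by construction — rather than by the coordinate computation you give, which presupposes that no further change of basis in $H$ was made.

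Part (2), however, has a genuine gap, and it is exactly the one you flag and leave open: your eigenvalue splitting yields a decomposition $\bigoplus_i k_i\otimes F_i\oplus U^{\tilde F,\tilde T}$ whose last summand contains no decomposable vectors, but nothing forces it to have maximal dimension among such subspaces of $U$, and in general it does not. Take $U=H\otimes E'$ and in part (1) choose the complement $C=h'\otimes E'$, so that $T''$ is a multiple of the identity: then every vector of $F''$ is an eigenvector, $\tilde F=\{0\}$, and you obtain $U=h\otimes E'\oplus h'\otimes E'$ with trivial last summand, whereas the maximal decomposable-free $U^{F,T}$ subspace of $U$ has dimension $\dim E'$ (see the remark following Prop. \ref{para-quaternionic subspaces proposition}). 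So maximality must be built into the construction, not hoped for afterwards. The paper's proof is precisely your plan (b): fix first a decomposable-free $U_1=U^{F,T}$ of maximal dimension, take any complement $U_2$, split $U_2$ (essentially by your eigenvalue analysis) as $\bigoplus(k_i\otimes F_i)\oplus U_3$ with $U_3$ decomposable-free, note that maximality of $U_1$ forces $U_1\oplus U_3$ to contain nonzero decomposable vectors whenever $U_3\neq\{0\}$ (otherwise, by Lemma \ref{subspaces with no decomposable}, $U_1\oplus U_3$ would be a larger decomposable-free $U^{F,T}$ subspace), and iterate on $U_1\oplus U_3$; the recursion terminates because the dimension of the decomposable-free remainder strictly decreases. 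Carrying out this iteration (or your alternative (a)) is needed to complete the proof.
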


\begin{proof}
1) If $U=U^{F,T}$ there is nothing to prove. Otherwise let consider all maximal decomposable subspaces in $U$ and let
 $U_1=h \otimes F' \subset U$ of minimum dimension among them.
Then any subspace complementary to $U_1$ in $U$ is  clearly  a $U^{F,T}$ subspace and of maximal dimension.

\vskip 0.3cm

2) If $U$ contains no decomposable subspaces, there is nothing to prove; otherwise let $U_1\subset U$ be of maximal dimension among all $U^{F,T}$  subspaces in $U$  containing no
decomposable subspaces. Then  $U=U_1 \oplus U_2$ with $U_2$ any complementary.
The subspace $U_2$ can be decomposed into a direct sum of some maximal decomposable subspaces and a $U_3=U^{F,T}$ subspace containing no
decomposable subspaces i.e $U_2= \bigoplus_{i=1}^s (k_i \otimes F_i) \oplus U_3$ with the $k_i$ pairwise independent. Necessarily $U_1 \oplus U_3$ contains some maximal decomposable subspaces,
then $U= U_1  \oplus  \bigoplus_{i=1}^s (k_i \otimes F_i) \oplus  \bigoplus_{j=s+1}^t (k_j \otimes F_j) \oplus U_4$ with $U_4=U^{F,T}$ a subspace containing no
decomposable subspaces  with $\dim U_4 < \dim U_3$ and the $k_i, i=1, \ldots,t$ pairwise independent. By carrying on such procedure, the thesis follows.
\end{proof}

Concerning the unicity of the presentation of the
form $U^{F,T}$ we state the following lemma whose proof is
straightforward (see proof of prop. \ref{number of decomposable ina a U_FT subspace})
\begin{lemma}  \label{subspaces with no decomposable}
Given a subspace $U \subset H \otimes E$ the following conditions are equivalent:
1)  $U$ does not contain any non zero decomposable vectors; \\
2) $U=U^{F,T}$  w.r.t. \textbf{any}
symplectic basis  $\mathcal B=(h_1,h_2)$,  $F= F(\mathcal B) \subset E$ and  $T= T(\mathcal B)$ injective.\\
3) there exists a basis $(h_1,h_2)$, such that  $U=U^{F,T}$  for
some subspace $F \subset E$ and some linear injective map $T$ with no invariant line
(i. e. if $Tf= \lambda f, \; \lambda \in \R, \; f \in F$ then
$f=0$).

A necessary condition for 1),2),3) to hold is that $\dim U \le \dim E$.
\end{lemma}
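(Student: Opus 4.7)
The plan is to prove the equivalences cyclically, $(1)\Rightarrow(2)\Rightarrow(3)\Rightarrow(1)$, leveraging Proposition \ref{spazi U(F,L)} at nearly every step, and then to read off the dimension bound from the isomorphism $\phi$ of (\ref{isomorphism U_F}).

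First, for $(1)\Rightarrow(2)$: given any symplectic basis $(h_1,h_2)$, the absence of nonzero decomposable vectors in $U$ implies in particular that $(h_i\otimes E)\cap U=\{0\}$ for $i=1,2$. Proposition \ref{spazi U(F,L)}(a) applied to $h=h_2$ then presents $U$ in the form $U^{F,T}$, and Proposition \ref{spazi U(F,L)}(b) with both intersections trivial gives injectivity of $T$.

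Next, $(2)\Rightarrow(3)$: pick any symplectic basis $(h_1,h_2)$, so $U=U^{F,T}$ with $T$ injective by hypothesis. Suppose for contradiction $T$ had an invariant line, say $Tf_0=\lambda f_0$ with $f_0\neq 0$; then
\[
h_1\otimes f_0+h_2\otimes\lambda f_0=(h_1+\lambda h_2)\otimes f_0
\]
is a nonzero decomposable vector in $U$. Complete $h_2':=h_1+\lambda h_2$ to a symplectic basis $(h_1',h_2')$ of $H$. By (2), $U=U^{F',T'}$ w.r.t. this new basis, which by the very definition (\ref{U_FT form}) forces $(h_2'\otimes E)\cap U=\{0\}$ — contradicting the decomposable vector just produced. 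Hence (3) holds.

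For $(3)\Rightarrow(1)$: fix the basis $(h_1,h_2)$ from (3) and suppose a nonzero decomposable $k\otimes e\in U$ exists, with $k=\alpha h_1+\beta h_2$ and $e\neq 0$. Writing $k\otimes e=h_1\otimes(\alpha e)+h_2\otimes(\beta e)$ and comparing with (\ref{U_FT form}), we must have $\alpha e\in F$ and $T(\alpha e)=\beta e$. If $\alpha\neq 0$, set $f=\alpha e\neq 0$; then $Tf=(\beta/\alpha)f$, violating the no-invariant-line condition. If $\alpha=0$, then $f=0$ so $Tf=0=\beta e$, forcing $e=0$ (since $\beta\neq 0$), a contradiction. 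This completes the cycle. Finally, the dimension bound is immediate: the map $\phi$ of (\ref{isomorphism U_F}) realises $U^{F,T}\cong F\subseteq E$, so $\dim U=\dim F\le\dim E$.

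The only non-routine step is $(2)\Rightarrow(3)$, whose point is that hypothesis (2) is very strong (valid in \emph{every} basis), and the trick is to cook up a bespoke basis in which a putative invariant line of $T$ would yield a forbidden decomposable vector of the form $h_2'\otimes e$. The other implications are direct applications of Proposition \ref{spazi U(F,L)} and of the defining form of $U^{F,T}$.
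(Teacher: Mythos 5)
Your proof is correct and rests on the same key observation the paper invokes (it declares the lemma ``straightforward'' and points to the proof of Proposition \ref{number of decomposable ina a U_FT subspace}): a nonzero decomposable vector $k\otimes f\in U^{F,T}$ with $k=a h_1+b h_2$ exists exactly when $f$ spans a $T$-invariant line (with $Tf=(b/a)f$). Your cyclic organisation $(1)\Rightarrow(2)\Rightarrow(3)\Rightarrow(1)$, with Proposition \ref{spazi U(F,L)} supplying the $U^{F,T}$ presentation and the bespoke basis $h_2'=h_1+\lambda h_2$ turning an invariant line into a forbidden decomposable, is a faithful write-up of exactly that argument.
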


From the metrical point of view we have easily the following
\begin{lemma}
Let $U=U^{F,T}$ be a subspace and $\phi$ the isomorphism  (\ref{isomorphism U_F}).
Let \\$g_F= \phi^* g_U$ be the  pullback of  the (possibly degenerate)  restriction of  $g$ to $U$. Then
\begin{equation} \label{Hermitian metric on F}
 g_F(f,f')= -2 (\omega^E \circ T)^{sym}(f,f')=   -[ \omega^E(Tf,f') + \omega^E(Tf',f)]
\end{equation}
\end{lemma}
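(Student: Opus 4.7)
The plan is to compute $g(\phi(f),\phi(f'))$ directly by expanding the bilinear form $g=\omega^H\otimes\omega^E$ on the two decompositions $\phi(f)=h_1\otimes f+h_2\otimes Tf$ and $\phi(f')=h_1\otimes f'+h_2\otimes Tf'$, then simplifying using the symplectic properties of $\omega^H$ and $\omega^E$. There is no subtle obstacle here; the content of the lemma is essentially a bookkeeping identity.

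Concretely, I would first write
\begin{align*}
g(\phi(f),\phi(f')) &= \omega^H(h_1,h_1)\,\omega^E(f,f') + \omega^H(h_1,h_2)\,\omega^E(f,Tf') \\
&\quad + \omega^H(h_2,h_1)\,\omega^E(Tf,f') + \omega^H(h_2,h_2)\,\omega^E(Tf,Tf').
\end{align*}
Since $(h_1,h_2)$ is a symplectic basis with $\omega^H=h_1^*\wedge h_2^*$, the two diagonal terms vanish, while $\omega^H(h_1,h_2)=1=-\omega^H(h_2,h_1)$. So the expression collapses to
\begin{equation*}
g(\phi(f),\phi(f')) = \omega^E(f,Tf') - \omega^E(Tf,f').
\end{equation*}

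Next I would invoke the antisymmetry of $\omega^E$ to rewrite $\omega^E(f,Tf')=-\omega^E(Tf',f)$, obtaining
\begin{equation*}
g_F(f,f')=\phi^*g_U(f,f')=-\bigl[\omega^E(Tf,f')+\omega^E(Tf',f)\bigr],
\end{equation*}
which is the stated formula. The final identification with $-2(\omega^E\circ T)^{\mathrm{sym}}$ is then just the definition of the symmetrization of the bilinear form $(f,f')\mapsto \omega^E(Tf,f')$. This is the entire argument: a four-term expansion, two cancellations, and one use of antisymmetry.
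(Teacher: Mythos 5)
Your computation is correct and is exactly the straightforward expansion the paper has in mind (it states the lemma without proof, and the intermediate identity $g(X,Y)=\omega^E(f,Tf')-\omega^E(Tf,f')$ you obtain reappears verbatim in the paper's later arguments, e.g.\ for totally real subspaces). Nothing to add.
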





\subsection{Para-quaternionic subspaces} \label{section: Para-quaternionic subspaces}
\begin{defi} A   subspace $U \subset V$ is called
\textbf{para-quaternionic}  if it is $\widetilde{Q}$-invariant,
or equivalently,
 for one and hence for any para-hypercomplex basis $(I,J,K)$ of $\widetilde{Q}$ one has
$IU \subset U, \; JU \subset U, \; KU \subset U$.
\end{defi}

The sum and the intersection of  para-quaternionic subspaces is  para-quaternionic.

\begin{prop} \label{para-quaternionic subspaces proposition}
Let $(E')^{k} \subset E$ be any subspace. Then
\begin{equation} \label{para-quaternionic subspace}
 U^{2k} =
H \otimes E'
\end{equation}
is a para-quaternionic subspace of dimension $2k$.
Viceversa any para-quaternionic subspace   of $V$ has this form.
Moreover  $U$ is para-quaternionic Hermitian (with neutral metric)  iff $E'$ is $\omega^E$-symplectic.
\end{prop}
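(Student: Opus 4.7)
The plan is to handle the two parts separately: (i) show that the $H\otimes E'$ form both suffices and is necessary for para-quaternionicity, and (ii) compute when the induced metric is non-degenerate.

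For the easy direction, since $\widetilde{Q}=\mathfrak{sl}(H)\otimes Id$ acts only on the first tensor factor, for any $A\in \mathfrak{sl}(H)$ and $E'\subseteq E$ we have $A(H\otimes E')=(AH)\otimes E'\subseteq H\otimes E'$. Hence $U=H\otimes E'$ is $\widetilde{Q}$-invariant and clearly of dimension $2\dim E' = 2k$.

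For the converse, I would fix the symplectic basis $(h_1,h_2)$ of $H$ and the corresponding admissible basis $(I,J,K)$ given by (\ref{standard hypercomplex structure}). Since $K$ is a para-complex structure whose eigenspaces are $V_K^{-}=h_1\otimes E$ and $V_K^{+}=h_2\otimes E$, the $K$-invariance of $U$ gives the decomposition
\[ U = (U\cap (h_1\otimes E))\;\oplus\;(U\cap (h_2\otimes E)) \;=\; h_1\otimes E_1 \;\oplus\; h_2\otimes E_2 \]
for uniquely determined subspaces $E_1,E_2\subseteq E$. Now I would use the $J$-invariance: from (\ref{standard hypercomplex structure}), $J(h_1\otimes E_1)=h_2\otimes E_1\subseteq U$, and this element lies in $h_2\otimes E$, so $h_2\otimes E_1\subseteq h_2\otimes E_2$, i.e.\ $E_1\subseteq E_2$. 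The symmetric argument applied to $J(h_2\otimes E_2)=h_1\otimes E_2$ yields $E_2\subseteq E_1$. Thus $E_1=E_2=:E'$ and $U=H\otimes E'$, which immediately forces $\dim U=2\dim E'$.

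For the metric statement, the restriction of $g=\omega^H\otimes \omega^E$ to $H\otimes E'$ equals $\omega^H\otimes (\omega^E|_{E'\times E'})$. Since $\omega^H$ is non-degenerate on $H$, this tensor product is non-degenerate as a bilinear form on $H\otimes E'$ if and only if $\omega^E|_{E'\times E'}$ is non-degenerate, i.e.\ $E'$ is $\omega^E$-symplectic. When this holds, $(U,\widetilde{Q}|_U,g|_U)$ is itself a para-quaternionic Hermitian structure, so by Proposition \ref{dimension 4n of a Hermitian para-quaternionic vector space} and the general discussion preceding it the induced metric has neutral signature. The only mild subtlety, which I expect to be the main obstacle to write down cleanly, is identifying $V_K^{\pm}$ explicitly from (\ref{standard hypercomplex structure}) so that the $K$-eigenspace decomposition of $U$ really gives subspaces of the form $h_i\otimes E_i$; once that is settled, the rest is linear algebra.
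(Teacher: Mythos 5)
Your proposal is correct and follows essentially the same route as the paper: both fix the standard admissible basis $(I,J,K)$ attached to a symplectic basis $(h_1,h_2)$ and use two of the three endomorphisms to force $U=h_1\otimes E'\oplus h_2\otimes E'$ with $p_1(U)=p_2(U)$ (the paper extracts the decomposable components of a generic $X\in U$ via $K$ and then applies $I$, which is the same computation as your $K$-eigenspace splitting followed by $J$). The metric claim is likewise equivalent: your tensor-product non-degeneracy argument matches the paper's block form of $g|_U$ with respect to the two totally isotropic summands.
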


\begin{proof}
The subspace $U^{2k}=H^2 \otimes (E')^{k} \subset H
\otimes E$ is clearly $\widetilde{Q}$-invariant. Viceversa, let $U \subset V$ be a para-quaternionic subspace.
Let fix a basis $(h_1,h_2)$ in $H$  and
let moreover $(I,J,K)$ be the associated standard para-hypercomplex Hermitian structure. The subspaces $E' =
p_1(U)$ and $p_2(U)$ coincide since, by the $I$-invariance, for any $X=h_1
\otimes e + h_2 \otimes e' \in U$, the vector $IX=h_2 \otimes e -
h_1 \otimes e' $ is in $U$. Also, since $KX= -h_1 \otimes e + h_2 \otimes e'$, by the
$K$-invariance  the decomposable vectors $h_1
\otimes e$ and $ h_2 \otimes e'$ are in $U$. Therefore $U=H
\otimes E'$.



From the metrical  point of view,
the subspaces $h_1 \otimes E'$ and $h_2 \otimes E'$ are totally
isotropic and the metric on $U$, with respect to the  decomposition  $U= h_1 \otimes E' \oplus h_2 \otimes E'$, is given by
\begin{equation} \label{metric in para-quaternionic
subspace}
 g|_{U}=\left(
   \begin{array}{c c}
0 & \omega^E|_{E'}\\
 (\omega^E|_{E'})^t & 0
\end{array}
\right). \end{equation}
Then $U$ is Hermitian para-quaternionic if and
only if $E'$ is $\omega^E|_{E'}$-symplectic.
\end{proof}

\begin{remk}
Referring to the decompositions $1),\; 2)$ given in prop. (\ref{decomposition of a subspace}),
notice that a para-quaternionic subspace  $U=H \otimes E'$ decomposes respectively as
\[
1) \quad U= h_1 \otimes E' \oplus h_2 \otimes E', \quad \qquad 2) \quad U= h_1 \oplus E' \oplus \{h_1 \otimes e' + h_2 \otimes Te', \; e' \in E'\}
\]
w.r.t. any basis $(h_1,h_2)$, with $T$  any automorphism of $E'$ with no real eigenvalues.
In this case then, the  dimensions of the maximal $U^{F,T}$ subspaces in $U=H \otimes E'$
 in the decompositions 1) and  2) coincide and equal the dimension of $E'$.
\end{remk}


Any subspace $U$ of $V$ contains a (possibly zero) {\it maximal para-quaternionic subspace}
$U_0= U \underset{A \in \widetilde Q} {\cap} A(U)$.
Equivalently, $U_0= U \cap IU \cap JU\cap KU$ for any admissible basis $(I,J,K)$ of $\widetilde Q$.

\begin{defi} A subspace $U \subset V$ is called \textbf{pure} if $U_0=\{0\}$, i.e. it does not contain any non zero para-quaternionic subspace.
\end{defi}
Clearly, from prop. (\ref{spazi U(F,L)}a ), \textit{any $U^{F,T}$ subspace is pure}.



\subsection{Complex subspaces} \label{section:complex subspaces}

\begin{defi}
A  subspace $U\subset V$ is called \textbf{complex} if there
exists a compatible complex structure $I \in \widetilde{Q}$ such
that $U$ is $I$-invariant i.e $IU \subset U$. We denote it by  $(U,I)$.
\end{defi}
We shall include $I$ into an admissible basis $(I,J,K)$ of $\widetilde{Q}$.
 Such basis  will be called \textbf{adapted} to the subspace $(U,I)$. Adapted bases are defined up to a rotation in the real plane spanned by $J$ and $K$.

\begin{lemma} The complex structure $I$ is unique up to sign unless $U$ is para-quaternionic.
\end{lemma}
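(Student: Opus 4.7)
The plan is to parametrize any second complex structure $I'\in\tilde Q$ in terms of a fixed admissible basis $(I,J,K)$ of $\tilde Q$ and then exploit that compositions of $U$-preserving operators still preserve $U$. Writing $I' = aI + bJ + cK$ for uniquely determined scalars $a,b,c \in \R$ and using the relations $-I^2=J^2=K^2=\mathrm{Id}$ together with the pairwise anticommutativity of $I,J,K$, the cross terms in $(I')^2$ cancel and one obtains $(I')^2 = (-a^2+b^2+c^2)\,\mathrm{Id}$. Hence the condition $(I')^2 = -\mathrm{Id}$ forces $a^2 - b^2 - c^2 = 1$, and in particular $a^2 \geq 1 > 0$.

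Since $IU \subset U$ and $I'U \subset U$, and since $a \neq 0$, the difference $L := I' - aI = bJ + cK$ preserves $U$ as well. If $L = 0$ then $b=c=0$, so $a = \pm 1$ and $I' = \pm I$, which is the conclusion.

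Otherwise $(b,c)\neq(0,0)$, and I would argue that $U$ must then be para-quaternionic. Because the composition of two $U$-preserving operators preserves $U$, $IL$ also preserves $U$, and a direct application of $IJ = K$ and $IK = -J$ gives $IL = bK - cJ$. Thus both $L = bJ + cK$ and $IL = -cJ + bK$ preserve $U$, while the change-of-basis matrix $\left(\begin{smallmatrix} b & c \\ -c & b\end{smallmatrix}\right)$ from $\{J,K\}$ to $\{L,IL\}$ has determinant $b^2+c^2\neq 0$. Inverting it expresses $J$ and $K$ as linear combinations of $L$ and $IL$, so both $J$ and $K$ individually preserve $U$. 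Together with $IU\subset U$, this shows that all of $\tilde Q$ preserves $U$, i.e.\ $U$ is para-quaternionic, contrary to hypothesis.

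The argument is essentially a linear-algebra computation inside $\tilde Q\cong\mathfrak{sl}_2(\R)$; the only point requiring care is the sign bookkeeping that turns $(I')^2=-\mathrm{Id}$ into the equation of a two-sheeted hyperboloid $a^2-b^2-c^2=1$ rather than a sphere. That non-compact feature --- more precisely, the fact that the axis corresponding to the complex generator $I$ is the distinguished one --- is exactly what forces $a\neq 0$ and is the only place where being a \emph{complex} (rather than a para-complex) structure in $\tilde Q$ enters the proof.
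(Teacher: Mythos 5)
Your proof is correct and follows essentially the same route as the paper: expand the second complex structure as $aI+bJ+cK$ with $a^2-b^2-c^2=1$ and show that $(b,c)\neq(0,0)$ forces $J$ and $K$ (hence all of $\widetilde Q$) to preserve $U$. The only difference is the final step --- the paper writes $bJX+cKX=J(bX-cIX)$ and inverts $b\,\mathrm{Id}-cI$ on $U$ using that $I$ has no real eigenvalues, whereas you compose $L=bJ+cK$ with $I$ and invert the $2\times 2$ matrix on the span of $J$ and $K$; both hinge on $b^2+c^2\neq 0$, and your emphasis on $a\neq 0$ is in fact never used in the argument.
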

\begin{proof}
Let $\tilde{I}= aI + bJ + cK, \quad  \tilde{I}^2=-Id \quad (a^2-b^2-c^2=1)$, be a compatible complex structure such that $\tilde{I}U=U$.
Then for any $X \in U$ one has $aIX + bJX - cJ I X \in U$, hence
$ J(bX-cIX) \in U , \quad \forall X \in U$. If $(b,c)=(0,0)$ then $\tilde{I}= \pm I$; otherwise the map $X \mapsto (bX-cIX ), \; \forall X \in U$
 is an automorphism of $U$, since $I$ has no real eigenvalues, hence $JU=U$ i.e. $U$ is para-quaternionic.
\end{proof}

\begin{lemma}\label{equivalent definition of pure-complex subspace}
A complex subspace $(U,I) \subset (V,\widetilde{Q})$  is pure if and only if there exists
a para-complex structure $J
\in \widetilde{Q}$ such that $ IJ=-J I, \; JU  \cap U=\{0\}$.
\end{lemma}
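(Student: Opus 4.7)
The plan is to show that when $(I,J,K)$ is an admissible basis extending the given $I$, the four-fold intersection defining $U_0$ collapses to $U\cap JU$, after which both implications of the lemma are immediate.

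Since $(U,I)$ is complex we have $IU=U$, so $U_0 = U\cap IU\cap JU\cap KU$ simplifies to $U\cap JU\cap KU$. The key observation is that $JU$ is itself $I$-invariant: for $Y\in U$, the anticommutation $IJ=-JI$ gives $I(JY)=-J(IY)$, and $IY\in U$ by $I$-invariance of $U$, so $I(JY)\in JU$. Since $K=IJ$, this yields $KU = I(JU)=JU$, hence $U_0 = U\cap JU$.

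Both directions of the lemma now follow. For $(\Rightarrow)$, extend the given $I$ to any admissible basis $(I,J,K)$ of $\widetilde{Q}$; then $J$ is a para-complex element of $\widetilde{Q}$ with $IJ=-JI$, and purity of $U$ combined with the reduction yields $JU\cap U = U_0 = \{0\}$. For $(\Leftarrow)$, given the hypothesized $J$, set $K:=IJ$; the relations $I^2=-Id$, $J^2=Id$, $IJ=-JI$ make $(I,J,K)$ admissible (a short verification: $K^2=IJIJ=-I^2J^2=Id$, and all anticommutators vanish), so the reduction $U_0=U\cap JU$ applies and the hypothesis $JU\cap U=\{0\}$ forces $U_0=\{0\}$, i.e.\ $U$ is pure.

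The only substantive step is the short identity $I(JY)=-J(IY)$ combined with $I$-invariance of $U$; everything else is bookkeeping with the admissible-basis relations and the basis-independence of $U_0$ already noted in the text.
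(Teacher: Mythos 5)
Your proof is correct and follows essentially the same route as the paper's: the whole content is the observation that $I$-invariance of $U$ together with $K=IJ$ forces $KU=I(JU)=JU$ (the paper's version: $X,JX\in U$ implies $KX=I(JX)\in U$), which reduces the four-fold intersection $U_0$ to $U\cap JU$. The only cosmetic difference is that you phrase this at the level of subspaces, obtaining both implications at once, where the paper argues on a single vector $X$ (producing the para-quaternionic span $\langle X,IX,JX,KX\rangle_{\R}\subset U$) and dismisses the converse as obvious.
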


\begin{proof}
Let $(U,I)$ be pure complex. Suppose there exists   $X \in U$ such that
 $JX \in U$  with $J$
a compatible para-complex structure and $IJ=-JI=K$.
Then  $KX=IJX \in U$ and $<X,IX,JX,KX>_\R \subset U$ is  a
para-quaternionic subspace. Hence
$X\neq 0$ would give a contradiction. Viceversa is obvious.
\end{proof}
It is also immediate to verify that
if $(U,I)$ is pure complex, then, for \underline{any} $A \in  \widetilde Q, A \neq \pm aI, \; a \in \R$, one has $A U  \cap U=
\{0\}$.


Considering now also the metric structure of $V$ we have the following special class of pure complex subspaces.
\begin{defi}  \label{definition of totally complex subspace}
An Hermitian  complex subspace $(U,I)$  of $V$
 is called \textbf{totally complex}  if
there exists an adapted hypercomplex basis $(I,J,K)$ such that $
JU\perp U$ ($\Leftrightarrow KU \perp U$) with respect to the
(non degenerate) induced  metric $g$.
\end{defi}

Note that the Hermitian complex subspace $(U,I)$ is totally complex iff, with
respect to the adapted basis $(I,J,K)$,  the restriction to $U$ of the Hermitian product
(\ref{Hermitian scalar product on a PQH vector space}) has complex values\footnote{In fact,
following terminology of \cite{B},  a  totally complex  subspace could be called a
{\it subspace  with complex Hermitian product.}}. Note also that the hypothesis  of Hermitianicy
is necessary to ensure that  any  totally complex subspace is pure.

Let  $(U,I)$ be a totally complex subspace  and   $(I,J,K)$  an adapted basis such that
$JU \perp U$. Any   $A= aI + bJ + cK \in \widetilde Q$,   satisfies $A U \perp U$ if and only if $a=0$.
Then again, adapted bases are defined up to a rotation in the plane $<J,K>$.



By taking into account that $U_0$ is $I$-complex  and from known facts about complex structures,
one has the following
\begin{prop} \label{unicity of complex structure}
Any complex subspace $(U,I)$ is a direct sum of the maximal para-quaternionic subspace
$U_0$ and a pure $I$-complex subspace $(U',I)$ i.e. $U= U_{0} \oplus U'$.
If  $(U'',I)$ is another $I$-pure complex subspace complementary to $U_0$, then $U'$ and $U''$ are isomorphic
as $I$-complex spaces.
\end{prop}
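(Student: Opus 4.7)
The plan is to exploit that, once $U_0 \subseteq U$ is seen to be $I$-invariant, $(U, I)$ becomes an ordinary complex vector space and $U_0$ a complex subspace, so an $I$-stable complement $U'$ exists for free; the only real content is then to verify that such a complement is automatically pure.

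First I would check that $U_0$ is $I$-invariant. Since $U$ is $I$-invariant, $IU = U$, and hence $U_0 = U \cap JU \cap KU$. Using the relations $IJ = K$ and $IK = -J$, for any $X \in U_0$ I get $IX \in IU \cap (IJ)U \cap (IK)U = U \cap KU \cap JU = U_0$. Hence $U_0$ is a $\C$-subspace of the complex vector space $(U, I)$, so I can pick any $I$-stable complement $U'$, obtaining $U = U_0 \oplus U'$.

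Next I would show that $U'$ is pure. By Lemma \ref{equivalent definition of pure-complex subspace} applied with the same $J$ from the admissible basis, it suffices to prove $JU' \cap U' = \{0\}$. Take $X \in U' \cap JU'$ and write $X = JY$ with $Y \in U'$. Then $X \in U$, while $X = JY \in JU$. Using $KJ = (IJ)J = IJ^2 = I$ (since $J^2 = Id$), I compute $KX = KJY = IY \in U$, because $Y \in U' \subseteq U$ and $U$ is $I$-stable, so $X \in K^{-1}U = KU$. Hence $X \in U \cap JU \cap KU = U_0$, and since $X \in U'$ I conclude $X \in U_0 \cap U' = \{0\}$.

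Finally, if $U''$ is another $I$-pure complex complement of $U_0$ in $U$, the quotient map $U \to U/U_0$ restricts to $\R$-linear isomorphisms on both $U'$ and $U''$ and commutes with $I$; composing with its inverse on $U''$ yields an $I$-equivariant $\R$-linear isomorphism $U' \to U''$, which is precisely an isomorphism of complex vector spaces. The main obstacle is the purity check in step two: it is the only place where abstract nonsense does not suffice, and the commutation/anticommutation relations inside the admissible basis must be tracked carefully to reduce an arbitrary $X \in U' \cap JU'$ back into $U_0$.
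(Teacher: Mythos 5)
Your proof is correct and follows exactly the route the paper intends: the paper gives no written proof, only the remark that $U_0$ is $I$-complex together with ``known facts about complex structures'', and you have spelled out precisely those facts (the $I$-invariance of $U_0$ via $IJ=K$, $IK=-J$, the existence of an $I$-stable complement, and the quotient argument for uniqueness up to isomorphism). One small simplification: the purity of $U'$ is immediate without any computation, since every para-quaternionic subspace of $U$ lies in the maximal one $U_0$ and hence meets any complement of $U_0$ only in $\{0\}$ --- though your explicit verification that $JU'\cap U'=\{0\}$ via Lemma \ref{equivalent definition of pure-complex subspace} is also valid.
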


Hence \textit{the description of complex subspaces reduces to the description of pure complex
subspaces}.

Let $I$ be a compatible complex structure and
$(h_1,h_2)
$ a symplectic basis of $H$ such that  $I \equiv
\mathcal{I} \otimes Id$ using the notations introduced for
proposition (\ref{isomorphism between para-quaternionic space
 and tensor product H x E and sl(H)}), i.e. $I$ as in
(\ref{standard hypercomplex structure}).

\begin{teor}
With respect to $(h_1,h_2)$, a subspace $U \subseteq V$ is $I$-pure complex iff  $U=U^{F,T}$ with $T$ a complex structure on $F=p_1(U)$.
Then the map
\[
\begin{array}{llll}
\phi: &  (F,T^{-1}) & \rightarrow & (U^{F,T},I)\\
& f & \mapsto & h_1 \otimes f + h_2 \otimes Tf
\end{array}
\]
is an isomorphism of complex vector spaces.
The signature of the metric on $U$ is of type
$(2p,2s,2q), \, 2s=\dim \ker g|_U$, and $U$ is  Hermitian if and only if  $F$ is
$g_F$-non degenerate.
In this case
$\phi: (F,T^{-1},g_F) \rightarrow (U,I, g|_U)$ is an isomorphism
of Hermitian spaces. In particular $T^{-1}$ is $g_F$-skew
symmetric. The Kaehler form of $(U,I)$ is given by
\[
\phi^*(g|_U \circ I)=g_F \circ T^{-1}= -(\omega^E|_F + \omega^E|_F(T \, \cdot,T \, \cdot)).
\]

The subspace $(U,I)$ is totally complex if and only $F$ is $\omega^E$-symplectic and  $T$ is
$\omega^E|_F$-skew-symmetric ($\Longleftrightarrow$ if $T$
 preserves the form $\omega^E|_F$
i.e.
\begin{equation}\label{T preserves omega}
\omega^E|_F(f,f')= \omega^E|_F(Tf,Tf')\qquad\qquad \forall f,f' \in F)
\end{equation}
 or equivalently $g_F= -2 \omega^E|_F \circ T \quad $.
\end{teor}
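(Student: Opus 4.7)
The plan is to prove the two directions of the characterization, then peel off the metric statements one by one.

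First I will show that a pure $I$-complex subspace necessarily has the $U^{F,T}$ form. Given any nonzero $h \in H$, suppose $h\otimes e \in U$ with $e \neq 0$. Then $I$-invariance gives $I(h\otimes e) = Ih \otimes e \in U$, and since $I$ has no real eigenvalues, $\{h, Ih\}$ spans $H$, so $H \otimes \langle e\rangle \subseteq U$. But $H \otimes \langle e \rangle$ is para-quaternionic by Prop.~\ref{para-quaternionic subspaces proposition}, contradicting purity. Hence $(h \otimes E) \cap U = \{0\}$ for every $h \neq 0$, and Prop.~\ref{spazi U(F,L)} (parts a, b) gives $U = U^{F,T}$ with $T$ injective and $F = p_1(U)$. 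For the constraint on $T$, write $X = h_1\otimes f + h_2\otimes Tf$ and compute $IX = h_1\otimes(-Tf) + h_2\otimes f$ using (\ref{standard hypercomplex structure}); membership $IX \in U^{F,T}$ forces $-Tf \in F$ and $T(-Tf) = f$, i.e.\ $T^2 = -\mathrm{Id}$ on $F$.

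For the converse, given $U^{F,T}$ with $T^2 = -\mathrm{Id}$, the same computation shows $IU \subseteq U$. Purity follows from Lemma~\ref{equivalent definition of pure-complex subspace}: take $J$ to be the standard para-complex structure of (\ref{standard hypercomplex structure}), and observe that $J\phi(f) = h_1\otimes Tf + h_2\otimes f$ lies in $U^{F,T}$ only when $T(Tf) = f$, i.e.\ $T^2 f = f$; combined with $T^2 = -\mathrm{Id}$ this forces $f = 0$, so $JU \cap U = \{0\}$. The complex-linearity of $\phi\colon (F,T^{-1}) \to (U,I)$ is then a direct check: since $T^{-1} = -T$ on $F$, one has $\phi(T^{-1}f) = h_1\otimes T^{-1}f + h_2 \otimes f = h_1\otimes(-Tf) + h_2\otimes f = I\phi(f)$.

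For the metric statements I will invoke the formula $g_F(f,f') = -[\omega^E(Tf,f') + \omega^E(Tf',f)]$ already established. Via $\phi$ the bilinear forms $g_F$ and $g|_U$ correspond, so $U$ is $g$-non-degenerate iff $F$ is $g_F$-non-degenerate, and $\phi$ is an isomorphism of (para-)Hermitian spaces in that case. Skew-symmetry of $T^{-1}$ w.r.t.\ $g_F$ is forced by skew-symmetry of $I$ w.r.t.\ $g$ under the isomorphism $\phi$. The even signature claim $(2p, 2s, 2q)$ holds because $I$ is $g$-skew-symmetric, so $\ker g|_U$ and the maximal positive/negative subspaces are $I$-invariant, hence even-dimensional. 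For the Kähler form I compute $\phi^*(g|_U\circ I)(f,f') = g_F(T^{-1}f, f')$; substituting $T^{-1} = -T$ into the formula for $g_F$ and simplifying using antisymmetry of $\omega^E$ yields $-[\omega^E(f,f') + \omega^E(Tf,Tf')]$.

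Finally I handle the totally complex characterization by direct computation: expanding $g(JX,Y)$ for $X,Y \in U^{F,T}$ on decomposable tensors gives $g(JX,Y) = \omega^E(Tf,Tf') - \omega^E(f,f')$, so $JU \perp U$ iff $\omega^E|_F(Tf,Tf') = \omega^E|_F(f,f')$, i.e.\ $T$ preserves $\omega^E|_F$. The equivalence with $\omega^E|_F$-skew-symmetry of $T$ follows by substituting $f' \mapsto Tf'$ and using $T^2 = -\mathrm{Id}$. Under this condition the identity $\omega^E(Tf',f) = \omega^E(Tf,f')$ reduces the formula for $g_F$ to $g_F = -2\omega^E|_F \circ T$. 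The symplecticity of $F$ under $\omega^E|_F$ is then equivalent to non-degeneracy of $g_F$, since $T$ is invertible. The main conceptual obstacle is keeping track of which side of the isomorphism $\phi$ each object lives on and getting the signs consistent; once that bookkeeping is pinned down, every step reduces to a direct computation on decomposable tensors using (\ref{standard hypercomplex structure}) and the formula for $g_F$.
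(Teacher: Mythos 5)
Your proof is correct and follows essentially the same route as the paper's: rule out decomposable vectors in a pure $I$-complex subspace to obtain the $U^{F,T}$ form, compute $T^2=-\mathrm{Id}$, transfer the metric statements through $\phi$ via the formula (\ref{Hermitian metric on F}), and characterize the totally complex condition by expanding $g$ on decomposable tensors. The only cosmetic differences are that you establish purity of $U^{F,T}$ with $T^2=-\mathrm{Id}$ via Lemma \ref{equivalent definition of pure-complex subspace} (where the paper invokes the general remark that every $U^{F,T}$ subspace is pure) and that you derive the even signature from $I$-invariance of $\ker g|_U$ and of suitably chosen definite subspaces rather than from a decomposition into $I$-complex $2$-planes.
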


\begin{proof}
Let $(U,I)$ be a pure complex subspace in $H \otimes E$, $(h_1,h_2= \mathcal{I} h_1)$ a symplectic basis of $H$ s.t. $I \equiv
\mathcal{I} \otimes Id$ and $(I,J,K)$ an adapted basis.
Observe that there is no
non-zero decomposable element $X=h \otimes e$ in $U$; in fact
since $IX \in U$,
it would follows that $h_1 \otimes e$ and $h_2 \otimes e$ are both in $U$, hence $H \otimes \R e \subset E$ which is a contradiction.
From lemma (\ref{subspaces with no decomposable}),  $U=U^{F,T}$  w.r.t. the symplectic basis $(h_1,h_2=\mathcal{I} h_1)$; then
 it is  straightforward to verify that  $T^2=-Id$.
Viceversa, it is immediate to verify that the pure  subspace $U= U^{F,T}= \{h_1 \otimes f + h_2 \otimes Tf, \; T^2= -Id\}$ is  $I$-invariant.
The statements about the isomorphism $\phi$ are straightforward to verify.
The expression of the Kaehler form follows from a direct computation.


  Any pure  $I$-complex subspace admits a decomposition into pure
 $I$-complex 2-planes; each one of them is whether totally isotropic or with definite metric.
This implies that the signature of the metric on $U$ is $(2p,2s,2q), \; 2s= \dim \ker g|_U$ and clearly equals the
signature of  $g_F$ on $F$.  Consequently $U$ is Hermitian pure complex if and only
if  $F$ is $g_F$-non degenerate.

We now prove the last statement. Observe that, since w.r.t. $(h_1,h_2)$ $T$ is a complex structure on $F$, the claimed equivalence $\Longleftrightarrow$ is straightforward. For any $X= h_1 \otimes f + h_2 \otimes Tf, \; \in U$, we have
$JX=h_2 \otimes f + h_1 \otimes Tf$, hence
$ JU=\{Y=h_1 \otimes f - h_2 \otimes Tf, \quad f \in F\} $.
(Observe that for any pure complex subspace $(U,I)$, the subspace $JU$ is pure $I$-complex).
Then
 $U \perp JU$ if and only if, for any $f,f' \in F$,
\[
0=g(h_1 \otimes f + h_2 \otimes Tf, h_1 \otimes f' - h_2 \otimes
Tf')= -\omega^E(f,Tf')- \omega^E(Tf,f')
\] that is $\omega^E(f,Tf')= -\omega^E(Tf,f')$ which is equivalent to (\ref{T preserves omega}). The metric on $U$
verifies
\[g(h_1 \otimes f + h_2 \otimes Tf, h_1 \otimes f' + h_2 \otimes
Tf')= g_F(f,f')= 2 \omega^E(f,Tf').\] Therefore,  the non degeneracy of $U$ implies
that  $F$ is $\omega^E$-symplectic.
\end{proof}


\textit{This theorem reduces classification of pure complex subspaces to the classification of pairs $(F,T)$ with $F
\subset E$ and $T$ a complex structure on $F$}. In particular, in the  classification of totally complex
subspaces,  $F$ is, in addition,  $\omega^E|_F$-symplectic and $T$ preserves $\omega^E|_F$.

In the following section we  consider the para-complex subspaces. They  could be partly treated in a unified way together with the complex subspaces just seen.
But the existence of specific characteristics not appearing in the complex case, account for a separate
treatment.



\subsection{Para-complex subspaces} \label{section:para-complex subspaces}


\begin{defi}
A  subspace $U\subset V$ is called
\textbf{weakly para-complex} if there exists a para-complex
structure $K \in \widetilde{Q}$ such that $U$ is $K$-invariant
i.e $KU \subset U$. We denote such subspaces by $(U,K)$.
A \textbf{para-complex} subspace $(U,K)$ is a weakly para-complex subspace
such that  $ \dim \, U_{K}^+ =  \dim \, U_{K}^-$.
\end{defi}

\begin{remk} \label{eigenspaces of a paracomplex structure are decomposable subspaces}
  The eigenspaces $V^+_K,V^-_K$  of a given para-complex structure $K \in \widetilde Q$ are decomposable subspaces (then totally isotropic) of $V$, i.e. $V^+_K=h'\otimes E', V^-_K=h''\otimes E''$ and $E'\oplus E''=E$. As a first consequence (cfr. footnote in definition (\ref{footnote about metric on product structure subspaces})), any weakly para-complex subspace not para-complex is degenerate.

\end{remk}
  The presence of decomposable vectors  produces a difference passing from the complex to the weakly para-complex case but,
  as we will see, a common treatment of both cases is still possible.

Analogously to lemma (\ref{equivalent definition of pure-complex subspace}) one has
\begin{lemma} \label{equivalent definition of para-complex subspace}
 A  weakly para-complex subspace $(U,K)$ of $V$ is
 pure  if and only if there exists a complex
structure $I \in \widetilde{Q}$  anti-commuting with $K$ such that $ IU \cap U=\{0\}$.
\end{lemma}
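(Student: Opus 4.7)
The plan is to mirror the argument of lemma~\ref{equivalent definition of pure-complex subspace}, swapping the roles of the complex and para-complex structures. The algebraic fact driving the argument is that whenever $I \in \widetilde Q$ is a complex structure and $K \in \widetilde Q$ is a para-complex structure with $IK = -KI$, the product $J := IK$ satisfies $J^2 = -I^2 K^2 = Id$ and anti-commutes with both $I$ and $K$, so $(I,K,J)$ is an admissible basis of $\widetilde Q$.

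For the non-trivial implication I assume $(U,K)$ is pure and pick any complex structure $I \in \widetilde Q$ anti-commuting with $K$ (such an $I$ exists inside any admissible basis of $\widetilde Q$ containing $K$). To show $IU \cap U = \{0\}$, I argue by contradiction: suppose $0 \neq X \in U$ satisfies $IX \in U$. The $K$-invariance of $U$ yields $KX \in U$ and $K(IX) \in U$; combined with $IK = -KI$, this gives $JX = IKX = -K(IX) \in U$. Thus $\{X, IX, JX, KX\} \subset U$, and a routine check using $I^2 = -Id$, $J^2 = K^2 = Id$ together with the anti-commutation relations shows that the real span $W := \langle X, IX, JX, KX \rangle_\R \subseteq U$ is stable under $I$, $J$ and $K$, hence under all of $\widetilde Q$. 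Since $X \neq 0$, $W$ is a nonzero para-quaternionic subspace of $U$, contradicting purity.

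The converse is immediate, as in lemma~\ref{equivalent definition of pure-complex subspace}: were there a nonzero para-quaternionic subspace $W \subseteq U$, then $IW = W \subseteq U$ would yield $0 \neq Iw \in IU \cap U$ for any $0 \neq w \in W$, contradicting the assumption. The only step that might be considered an obstacle is the $\widetilde Q$-invariance check for the four-vector span $W$ in the forward direction, but this reduces to a handful of sign manipulations via the relations~(\ref{eight}); no genuine new idea beyond that of the complex case is required.
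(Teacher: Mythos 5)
Your argument is correct and is precisely the analogue of the paper's own proof of Lemma~\ref{equivalent definition of pure-complex subspace}, which is all the paper offers here (it introduces this lemma with ``analogously to\dots'' and omits the details). The key step --- using $K$-invariance and $IK=-KI$ to place $JX=IKX=-K(IX)$ in $U$ and so exhibit the nonzero $\widetilde Q$-invariant span $\langle X, IX, JX, KX\rangle_{\R}\subseteq U$ --- is exactly the intended reasoning, and your converse matches as well.
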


Remark that \textit{if $(U,K)$ is pure weakly para-complex, then for any compatible complex structure $\tilde I$, one has $\tilde IU \cap
U=\{ 0\}$}. In fact, let   $(I,J,K)$ be  the adapted basis of the para-complex subspace $(U,K)$ with
$IU \cap U=\{0\}$. Let $\widetilde Q \ni \tilde I= aI + bJ + cK, \; a^2-b^2 -c^2=1 $ be a compatible complex
structure. Suppose there exists a non zero $X \in U$ such that $\tilde I X\in U$; then  $I(aX - bKX) \in U$.
This implies $a=\pm b$, hence a contradiction. Then the admissible bases are defined up to a
pseudo-rotation in the plane
$<I,J>_\R$.


\begin{prop}\label{decomposition of a para-complex subspace}
Any weakly para-complex subspace $(U,K)$ is a direct sum $U= U_{0} \oplus \tilde U$ of the maximal para-quaternionic subspace
 and of a pure weakly $K$ para-complex subspace $\tilde U$.
If  $\tilde U'$ is another $K$ pure weakly para-complex complementary subspace, then $\tilde U$ and $\tilde U'$ are isomorphic as  weakly $K$-para-complex spaces.

Assume  $\tilde U \neq \{0\}$. If
$\tilde U \nsubseteq U_K^{\pm}$ then  the para-complex structure $K \in
\widetilde{Q}$ is unique up to sign.
Otherwise the  family of para-complex structures
\[
 \tilde K_a=aI + a J  \pm K, \quad  \text{if}  \quad \tilde U \subset {U_{K}}^+ \; ; \qquad  (\tilde K_a=aI - a J  \pm K, \quad  \text{if} \quad \tilde U \subset {U_{K}}^-)
\]
preserves $U$ for any adapted basis $(I,J,K)$.
\end{prop}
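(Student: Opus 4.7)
The strategy parallels the complex analog preceding Proposition~\ref{unicity of complex structure}, but the two nilpotent directions $I \pm J$ in $\tilde Q$ force an extra critical subcase. \emph{Decomposition.} Since $U_0$ is $\tilde Q$-invariant, $K$ preserves it and restricts to a para-complex structure on $U_0$. The product structure $K|_U$ decomposes $U = U_K^+ \oplus U_K^-$; I would choose complements $\tilde U_K^\pm$ of $(U_0)_K^\pm$ in $U_K^\pm$ and set $\tilde U := \tilde U_K^+ \oplus \tilde U_K^-$, which is a $K$-invariant complement to $U_0$ in $U$. Purity is automatic from the maximality of $U_0$, and two such complements project isomorphically onto $U/U_0$ via the $K$-equivariant quotient map, so they are isomorphic as weakly $K$-para-complex spaces.

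\emph{Uniqueness up to sign, easy subcases.} Let $K' = aI + bJ + cK \in \tilde Q$ be a para-complex structure preserving $U$, so $-a^2 + b^2 + c^2 = 1$. Since $cKU \subset U$, also $(aI + bJ)U \subset U$; using $I = -JK$, this reads $J(bX - aKX) \in U$ for every $X \in U$, in complete parallel with the complex analog. If $(a,b) = (0,0)$ then $K' = \pm K$. If $a^2 \ne b^2$ and $(a,b) \ne (0,0)$, the map $\phi(X) = bX - aKX$ is an automorphism of $U$ (any $X \in \ker \phi$ would be an eigenvector of $K|_U$ with eigenvalue $b/a \ne \pm 1$); hence $JU = J\phi(U) \subset U$ and then $IU = -JKU \subset U$, so $U$ is para-quaternionic, forcing $\tilde U = \{0\}$, a contradiction.

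\emph{The critical subcase $a^2 = b^2 \ne 0$ (main obstacle).} Here $c = \pm 1$ and $b = \epsilon a$ for $\epsilon = \pm 1$. Taking $\epsilon = +1$, the invariance collapses to $(I+J)U \subset U$. In coordinates where $V_K^+ = h_2 \otimes E$ and $V_K^- = h_1 \otimes E$, the nilpotent $\mathcal I + \mathcal J$ annihilates $h_2$ and sends $h_1 \mapsto 2h_2$, so $(I+J)$ kills $V_K^+$ and sends $V_K^-$ isomorphically onto $V_K^+$. Writing $U_K^- = h_1 \otimes A$ and $U_K^+ = h_2 \otimes B$, the invariance becomes $A \subset B$. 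Maximality of $U_0 = H \otimes E_0$ in $U$ identifies $E_0 = A \cap B$; splitting $A = E_0 \oplus A'$ so that $\tilde U_K^- = h_1 \otimes A'$, the inclusion $A' \subset B = E_0 \oplus B'$ lands any $a' \in A'$ in $A \cap B = E_0$, and hence in $A' \cap E_0 = \{0\}$. So $\tilde U_K^- = 0$ and $\tilde U \subset U_K^+$; the case $\epsilon = -1$ uses $I - J$ (which annihilates $V_K^-$ and maps $V_K^+$ isomorphically onto $V_K^-$) and yields $\tilde U \subset U_K^-$ by a symmetric argument.

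\emph{The family.} Conversely, if $\tilde U \subset U_K^+ \subset V_K^+$, then $(I+J)\tilde U = 0$ while $(I+J)U_0 \subset U_0$ by $\tilde Q$-invariance, so $(I+J)U \subset U$; hence each $\tilde K_a = a(I+J) \pm K$ preserves $U$. From $(I+J)^2 = 0$ and the anti-commutation $(I+J)K + K(I+J) = 0$ one obtains $\tilde K_a^2 = Id$, and the matrix $a(\mathcal I + \mathcal J) \pm \mathcal K$ has eigenvalues $\pm 1$ with one-dimensional eigenspaces on $H$, so $\tilde K_a$ is para-complex. The case $\tilde U \subset U_K^-$ is symmetric with $I - J$ replacing $I + J$.
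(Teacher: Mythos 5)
Your argument is correct and reaches all the assertions of the proposition, but in the decisive subcase it runs along a genuinely different track from the paper. The paper's proof keeps everything basis-free: writing $\tilde K=aI+bJ+cK$ and reducing the invariance to $I(aX-bKX)\in U$, it splits into $a=b=0$ (so $\tilde K=\pm K$), $a\neq\pm b$ (so $U$ is para-quaternionic), and $a=\pm b$, where it invokes purity of the complement $U'$ to conclude that $aX-bKX=0$ for all $X\in U'$, i.e.\ $U'$ lies in an eigenspace of $K$. That last step is stated very tersely (one still has to justify why $I(aX-bKX)\in U$ forces the vector to vanish on the pure part; this requires an auxiliary argument that the subspace $\{Y\in U': IY\in U\}$ generates a para-quaternionic subspace of $U$, hence is $\{0\}$). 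You instead pass to the decomposable coordinates $V_K^\pm=h_{2}\otimes E,\;h_{1}\otimes E$, exploit that the nilpotent $I+J$ (resp.\ $I-J$) annihilates $V_K^+$ (resp.\ $V_K^-$) and maps the other eigenspace onto it, and turn the invariance into the elementary containment $A\subset B$ for $U_K^-=h_1\otimes A$, $U_K^+=h_2\otimes B$; maximality of $U_0=H\otimes(A\cap B)$ then kills $\tilde U_K^-$ directly. This buys transparency: the purity step that the paper leaves implicit becomes a one-line set-theoretic computation, and the same coordinates immediately verify the converse (that each $\tilde K_a=a(I\pm J)\pm K$ squares to the identity, has balanced eigenspaces, and preserves $U$), which the paper asserts without checking. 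What you lose is only economy and basis-independence; note, though, that your closing claim "for any adapted basis" is fine because $\ker(I+J)=V_K^+$ holds for every admissible basis with $IJ=K$, a fact worth stating explicitly since your computation was done in one particular symplectic basis of $H$. Your treatment of the first statement (choosing $K$-invariant complements inside each eigenspace and comparing two complements through the quotient $U/U_0$) matches the paper's intended argument, which is only sketched by reference to Proposition \ref{unicity of complex structure}.
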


\begin{proof}
The proof of the first statement is analogous to that one in the proof of proposition (\ref{unicity of complex structure}).
Let $(U,K)$ be a weakly para-complex subspace, $(I,J,K)$ an adapted
basis and $\tilde K= aI+ bJ+ cK \in \widetilde Q, \quad \tilde K^2=Id,
\quad (a^2 -b^2-c^2=-1)$ an admissible para-complex structure.
For any $X \in U$, the vector $\tilde K X$ is in $U$  iff
\begin{equation} \label{unicity of para-complex structure}
aIX+ bJX= I(aX -bKX) \in U.
\end{equation}
Then, whether $a=b=0$ i.e. $\tilde K= \pm K$, or $U$ is para-quaternionic for $a \neq \pm b$ (in fact in this case the map $X \mapsto aX -bKX$ is an automorphism of $U$).
In case  $a=\pm b$,
let  consider the decomposition $U=U_0 \oplus U'$ into the maximal para-quaternionic subspace $U_0$  and the weakly pure para-complex component
 $U'$ respectively. Then condition (\ref{unicity of para-complex structure})
implies that $aX -bKX=0, \; \forall X \in U'$
which is verified only if $U'$ is an eigenspace of $K$.
\end{proof}

Hence also \textit{the  description of weakly para-complex subspaces reduces to that one  of pure  subspaces}.
In this case nevertheless there exists a difference regarding the unicity of the para-complex structure.
The reason for such a difference is a consequence of the results in  the next subsection
(see in particular the end of the proof of prop. \ref{proposition about decomposition of a nilpotent subspace}).


\begin{defi} Let $(U,K)$ be a  $K$-Hermitian    para-complex subspace.
Then $U$ is called \textbf{totally para-complex} if there
exists a complex structure $I \in \widetilde Q$ anticommuting with
$K$ such that $IU \perp U$ respect to the induced metric $g|_U$.
\end{defi}
Observations analogue to those following the definition (\ref{definition of totally complex subspace})
of totally complex subspaces can be made for totally para-complex subspaces.





Let $J$ be a compatible para-complex structure and $(h_1,h_2)$  a symplectic basis of $H$ such that
$J= \mathcal{J} \otimes Id$ i.e. $J$ as in  (\ref{standard hypercomplex structure}).
\begin{teor}
With respect to $(h_1,h_2)$, a subspace $U \subseteq V$ is pure weakly $J$-para-complex iff
 $U=U^{F,T}$ with $T$ a weakly para-complex structure on $F=p_1(U)$.
Then the map
\[
\begin{array}{llll}
\phi: &  (F,T) & \rightarrow & (U^{F,T},J)\\
& f & \mapsto & h_1 \otimes f + h_2 \otimes Tf
\end{array}
\]
is an isomorphism of weakly para-complex vector spaces.
The subspace $U$ is  $J$-Hermitian  if and only if  $F$ is
$g_F$-non degenerate hence necessarily para-complex.
In this case, the signature of $g|_U$ is always neutral and
   $\phi: (F,T,g_F) \rightarrow (U,J, g|_U)$ is an isomorphism of Hermitian  para-complex spaces. In particular $T$ is $g_F$-skew symmetric.

The para-Kaehler form is given by $\phi^*(g|_U \circ J)= g_F \circ T= -(\omega|_F - \omega|_F(T \, \cdot,T \, \cdot))$.

The  para-complex subspace $(U,J)$ is  totally para-complex if and only if $T$ is $\omega^E|_F$-skew-symmetric $\Longleftrightarrow$ the form $\omega^E|_F$ is skew-invariant w.r.t. $T$
 i.e.
 \[
 \omega^E|_F(f,f')= -\omega^E|_F(Tf,Tf') \qquad \qquad \forall f,f' \in F
 \]
  or equivalently $ g_F= -2 \omega^E|_F \circ T$, and $F$ is $\omega^E|_F$-symplectic.
\end{teor}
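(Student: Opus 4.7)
The plan is to mirror the complex-subspace theorem, replacing $I$ by $J$ and the relation $T^2 = -Id$ by $T^2 = Id$. The key new feature is that $T^2 = Id$ permits real eigenvalues, so $U^{F,T}$ can contain decomposable vectors (lying in the $J$-eigenspaces); this is precisely why the theorem addresses the ``weakly'' version rather than just para-complex subspaces.

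First I would establish that pure weakly $J$-para-complex is equivalent to $U = U^{F,T}$ with $T^2 = Id$ on $F = p_1(U)$. If a nonzero $h_1 \otimes e$ lay in $U$, then $J(h_1 \otimes e) = h_2 \otimes e \in U$ by (\ref{standard hypercomplex structure}), giving $H \otimes \R e \subset U$, a non-trivial para-quaternionic subspace contradicting purity. Thus $(h_1 \otimes E) \cap U = \{0\}$ and prop.~\ref{spazi U(F,L)}(a) yields $U = U^{F,T}$. Imposing $JU \subset U$ on generators $h_1 \otimes f + h_2 \otimes Tf$ forces $T(F) \subseteq F$ and $T^2 = Id_F$. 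Conversely any such $U^{F,T}$ is $J$-invariant by the same computation and pure because $T$ is invertible. The intertwining $\phi(Tf) = h_1 \otimes Tf + h_2 \otimes f = J\phi(f)$ is immediate, so $\phi$ is an isomorphism of weakly para-complex vector spaces.

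For the Hermitianicity part I would expand (\ref{Hermitian metric on F}) using $T^2 = Id$ and skew-symmetry of $\omega^E$ to show that $T$ is $g_F$-skew-symmetric. Since $\phi^* g|_U = g_F$ by construction and $\phi$ intertwines $T$ with $J$, the subspace $U$ is Hermitian para-complex iff $F$ is $g_F$-nondegenerate, in which case $\phi$ promotes to an isomorphism of Hermitian para-complex spaces. The two eigenspaces $F_T^\pm$ are automatically totally $g_F$-isotropic (standard consequence of $g_F$-skewness of the involution $T$), so nondegeneracy of $g_F$ forces $\dim F_T^+ = \dim F_T^-$; hence $T$ is genuinely para-complex on $F$ and the signature on $U$ is neutral. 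The para-Kaehler form then follows from $\phi^*(g|_U \circ J)(f, f') = g_F(Tf, f')$, which I would expand to obtain the stated closed form.

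Finally, for the totally para-complex characterization I would compute $g(IX, X')$ directly for $X = \phi(f), X' = \phi(f')$ with $I$ given by (\ref{standard hypercomplex structure}); this yields $-\omega^E(Tf, Tf') - \omega^E(f, f')$, whose vanishing for all $f, f'$ is exactly the stated skew-invariance of $\omega^E|_F$ under $T$. Substituting $f \mapsto Tf$ and using $T^2 = Id$ converts this to $\omega^E$-skew-symmetry of $T$, and feeding the result back into (\ref{Hermitian metric on F}) yields $g_F = -2\omega^E|_F \circ T$. The symplecticity of $\omega^E|_F$ on $F$ is then forced by nondegeneracy of $g_F$ together with invertibility of $T$. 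The main obstacle I anticipate is not any single calculation but keeping the three equivalent formulations of the totally para-complex condition consistent, with the correct sign flips that distinguish this identity from its complex-case analogue.
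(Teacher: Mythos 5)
Your proposal is correct and follows essentially the same route as the paper: purity plus $J$-invariance forces $(h_i\otimes E)\cap U=\{0\}$ so that Proposition \ref{spazi U(F,L)} gives $U=U^{F,T}$ with $T^2=Id$, the metric statements come from $g_F=\phi^*g|_U$, and the totally para-complex condition comes from computing $g(IX,X')=-\omega^E(Tf,Tf')-\omega^E(f,f')$. The only cosmetic difference is that you derive the neutral signature from the $g_F$-isotropy of the eigenspaces of $T$ on $F$, whereas the paper argues via the isotropy of the decomposable eigenspaces $U_J^{\pm}$; these are the same argument transported by $\phi$.
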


\begin{proof}

Let $(U^{k},J)$ be a pure weakly para-complex subspace in $H \otimes E$, $(h_1,\mathcal{J} h_1=h_2)$ a symplectic basis and $(I,J,K)$ an adapted basis.
Clearly $h_i \otimes E \cap U= \{0 \}, \; i=1,2$ since $U$ is pure. Then, from proposition (\ref{spazi U(F,L)}),
$U=U^{F,T}$. In particular, w.r.t. a symplectic basis $(h_1,\mathcal{J} h_1=h_2)$,
It is straightforward to verify that $T^2= Id$.
Viceversa the pure subspace $U=U^{F,T}=\{h_1 \otimes f+ h_2 \otimes Tf, \; T^2=Id \}$ is clearly $J$-invariant.

The eigenspaces $V_J^+$ and $V_J^+$   are decomposable subspaces
(see remark (\ref{eigenspaces of a paracomplex structure are
decomposable subspaces}) and consequently they are totally
isotropic.  Then the signature of the induced metric on $U$ (which
equals the signature of $g_F$ on $F$) is
$(m,k-2m, m)$ where $m= rk \, g(V_J^+ \times V_J^-)$. From  (\ref{Hermitian metric on F}), $U$ if $J$-Hermitian with neutral signature iff $F$ is $g_F$ non degenerate.


Consider now a $J$-Hermitian para-complex subspace $U=\{h_1 \otimes f + h_2 \otimes Tf, f \in F \}$. Then $IU=\{h_1 \otimes -Tf + h_2 \otimes f, f \in F \}$.
Imposing $U \perp IU$ it follows that the condition for $(U,J)$ to be  totally para-complex is given by
\begin{equation} \label{totally para-complex condition}
\omega|_F(f,f')= -\omega|_F(Tf,Tf')
\end{equation}
($\Leftrightarrow T$ is $\omega^E|_F$-skew-symmetric). The Hermitianicy hypothesis on $F$ implies that $F$ if  $\omega^E$-symplectic.
Then the decomposition  $F=E_1 \oplus E_2$ into into $\pm 1$-eigenspaces of $T$ is a Lagrangian decomposition
(i.e. $\omega^E|_{E_1} \equiv 0, \quad \omega^E|_{E_2} \equiv 0$) of the symplectic space $F$.
\end{proof}



The last  theorem reduces classification of weakly pure complex
subspaces to that one of pair $(F,T)$ with $F \subseteq E$
and $T$  a weakly para-complex structure on $F$.



Differently from the pure complex case, where $(U,I)$ admits the
form $U^{F,T}$ w.r.t. all symplectic bases of $H$, in the pure
weakly para-complex case, the presence of decomposable vectors in
$(U,J)$ and lemma (\ref{subspaces with no decomposable}) allow
for some special presentations of $(U,J)$ different from the
$U^{F,T}$ form. In particular, using the decomposition of $(U,J)$
into the $\pm 1$ eigenspaces of $J$ on $U$, we have the following
\begin{prop}
Let $(U,J)$ be a pure weakly para-complex subspace with $(h_1,h_2=\mathcal{J}  h_1)$ a symplectic basis. Let moreover $(I,J,K)$ be an adapted basis.
The pure weakly para-complex subspace decomposes as
\[
(U,J)= (h_1' \otimes E_1) \oplus (h_2' \otimes E_2)
\]
where $E_1 \oplus E_2=F$ is the $T$ $\pm 1$-eigenspaces decomposition of $F$,  $h_1'= -\frac{1}{\sqrt{2}}(h_1 + h_2),\;  h_2'=  \frac{1}{\sqrt{2}}(h_1 - h_2)$
the symplectic basis of eigenvectors of $\mathcal J$ and  $h_1' \otimes E_1= U_J^+$ and $h_2' \otimes E_2=U_J^-$ are the eigenspaces of $J|_U$.
\end{prop}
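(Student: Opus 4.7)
The plan is to directly apply the preceding theorem to reduce the problem to a calculation in $H$. By that theorem, since $(U,J)$ is pure weakly $J$-para-complex with respect to the chosen basis $(h_1, h_2 = \mathcal{J}h_1)$, we may write
\[
U = U^{F,T} = \{h_1 \otimes f + h_2 \otimes Tf : f \in F\}, \qquad T^2 = \mathrm{Id},
\]
where $F = p_1(U) \subseteq E$ and $T$ is a weakly para-complex structure on $F$. Hence $F$ decomposes as $F = E_1 \oplus E_2$ into the $+1$ and $-1$ eigenspaces of $T$.

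Next, I would rewrite the generic elements of $U$ in terms of this eigenspace decomposition. For $f \in E_1$ (so $Tf = f$),
\[
h_1 \otimes f + h_2 \otimes Tf = (h_1 + h_2) \otimes f = -\sqrt{2}\, h_1' \otimes f,
\]
and for $f \in E_2$ (so $Tf = -f$),
\[
h_1 \otimes f + h_2 \otimes Tf = (h_1 - h_2) \otimes f = \sqrt{2}\, h_2' \otimes f.
\]
Since $F = E_1 \oplus E_2$ and $\phi$ from (\ref{isomorphism U_F}) is a linear isomorphism, every element of $U$ splits uniquely as a sum of one vector of each type, yielding the direct sum decomposition $U = (h_1' \otimes E_1) \oplus (h_2' \otimes E_2)$.

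Finally, I would identify these two summands with $U_J^\pm$. Since $\mathcal{J}$ has eigenvectors $h_1+h_2$ (eigenvalue $+1$) and $h_1-h_2$ (eigenvalue $-1$), the vectors $h_1'$ and $h_2'$ are $\mathcal{J}$-eigenvectors of eigenvalues $+1$ and $-1$ respectively. Because $J$ acts as $\mathcal{J} \otimes \mathrm{Id}$, we have $J(h_1' \otimes f) = h_1' \otimes f$ for every $f \in E_1$, and $J(h_2' \otimes f) = -h_2' \otimes f$ for every $f \in E_2$, whence $h_1' \otimes E_1 \subseteq U_J^+$ and $h_2' \otimes E_2 \subseteq U_J^-$. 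The two inclusions are equalities by comparing dimensions ($\dim U = \dim F = \dim E_1 + \dim E_2$).

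There is essentially no real obstacle: the content is entirely in invoking the previous theorem to secure $T^2 = \mathrm{Id}$ and then diagonalising both $T$ on $F$ and $\mathcal{J}$ on $H$ in a compatible way. The only point to be careful about is that the identification $h_1' \otimes E_1 = U_J^+$ (rather than just $\subseteq$) requires using that $U$ is spanned by elements of the two prescribed forms above, which follows from the isomorphism $\phi$ and the $T$-eigenspace decomposition of $F$.
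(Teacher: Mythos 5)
Your proof is correct and is exactly the computation the paper implicitly relies on (the proposition is stated without proof there, as an immediate consequence of the preceding theorem): writing $\phi(f_1+f_2)=(h_1+h_2)\otimes f_1+(h_1-h_2)\otimes f_2$ for the $T$-eigenspace decomposition and noting that $h_1\pm h_2$ are the $\pm1$ eigenvectors of $\mathcal J$. The dimension (or eigenspace-uniqueness) argument you give to upgrade the inclusions $h_1'\otimes E_1\subseteq U_J^+$, $h_2'\otimes E_2\subseteq U_J^-$ to equalities is the right finishing touch.
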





\subsection{Nilpotent subspaces} \label{section:Nilpotent subspaces}
\begin{defi} A  subspace $U \neq \{0 \} \subset H \otimes E$ is called
\textbf{nilpotent} if there exists a non zero nilpotent endomorphism $A \in \widetilde Q$ which preserves $U$.
\end{defi}
The nilpotent subspace  $U$  will be called also $A$-nilpotent even if, as
we will see  later, such a  nilpotent endomorphism  is never unique.

If $U$ is nilpotent we call  \textbf{degree of nilpotency of $U$}
the minimum integer $n$ such that $A^n U=\{ 0\}, \; A \in \widetilde Q$.
Clearly, since $A^2=0$, the degree of nilpotency  of $U$ is at most 2, and equal to 1 if $U \subset \ker A$.

\begin{prop} \label{nilpotent subspaces}
A subspace $U$ is nilpotent of degree 1 iff  it is a decomposable  subspace $h \otimes F, \; F \subset E$. More generally,
let $A \in \widetilde Q$ be a nilpotent endomorphism and $\ker A= h\otimes E$.
A subspace $U$ is $A$-nilpotent  iff, with respect to a symplectic basis $(h_1\equiv h,h_2)$, one has
\[h_1 \otimes p_2(U) \subset U.\]
\end{prop}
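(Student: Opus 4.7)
The plan is to exploit the tensor-product structure $V = H \otimes E$ together with the explicit form of nilpotent elements of $\mathfrak{sl}(H) \simeq \mathfrak{sl}_2(\R)$, and then read off the invariance condition in a symplectic basis of $H$ adapted to $\ker A$.

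First I would observe that any nonzero nilpotent $A \in \mathfrak{sl}(H)$ has rank one on $H$. Indeed $A^2 = 0$ forces $\mathrm{Im}(A|_H) \subseteq \ker(A|_H)$, and since both subspaces have the same dimension they coincide and equal a single one-dimensional line $\R h \subset H$. Tensoring with $\mathrm{id}_E$ yields $\ker A = \mathrm{Im}\, A = h \otimes E$ inside $V$, so any nilpotent $A \in \widetilde Q$ is determined up to scale by its kernel line in $H$.

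For the degree $1$ part, $A U = \{0\}$ is equivalent to $U \subseteq \ker A = h \otimes E$, which is exactly the statement that $U = h \otimes F$ for some subspace $F \subseteq E$; conversely any such decomposable $U$ is annihilated by the nilpotent $A \in \mathfrak{sl}(H)$ with $A h = 0$.

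For the general statement I would fix a symplectic basis $(h_1 \equiv h, h_2)$ of $H$. In this basis $A$ is represented by a strictly upper-triangular matrix, so $A h_1 = 0$ and $A h_2 = \lambda h_1$ for some $\lambda \ne 0$ (nonzero because $A \ne 0$ and $\ker A|_H = \R h_1$). Then for an arbitrary $X = h_1 \otimes e + h_2 \otimes e' \in U$ a one-line computation gives
\[
A X \;=\; \lambda\, h_1 \otimes e' \;=\; \lambda\, h_1 \otimes p_2(X),
\]
so $A U \subseteq U$ if and only if $h_1 \otimes p_2(X) \in U$ for every $X \in U$, i.e.\ $h_1 \otimes p_2(U) \subseteq U$. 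This proves both directions simultaneously, and recovers the degree $1$ case as the sub-case $p_2(U) = \{0\}$. There is no real obstacle in the argument; the only point to check with some care is that $\lambda \neq 0$, which is what makes the condition $h_1 \otimes p_2(U) \subseteq U$ carry nontrivial content whenever $U$ is not contained in $h_1 \otimes E$.
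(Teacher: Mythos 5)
Your argument is correct and is essentially the paper's own proof: both reduce to the observation that a nonzero nilpotent $A$ with $\ker A = h_1\otimes E$ sends $X = h_1\otimes e + h_2\otimes e'$ to a nonzero multiple of $h_1\otimes e'$, so $A$-invariance is exactly $h_1\otimes p_2(U)\subseteq U$, and the degree-$1$ case is $U\subseteq\ker A$. Your presentation via the rank-one structure of $A$ on $H$ is a slightly cleaner packaging than the paper's coordinate computation with $A=\alpha I+\beta J+\gamma K$, $\alpha^2-\beta^2-\gamma^2=0$, but it is the same argument.
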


\begin{proof}
We first observe that the subspace $p_2(U)$ is  invariant for any change of symplectic basis $(h_1, h_2) \mapsto (h_1, h_2'), \;$.
 Fixed a basis $(h_1,h_2)$ in $H$, let $(I,J,K)$ be the associated standard para-hyper\-com\-plex structure.
Let $U$ be a $A$-nilpotent subspace of degree 1 with $A= \alpha I + \beta J + \gamma K$ and
$\| A \|^2 \equiv \alpha^2-\beta^2-\gamma^2=0$. Then, for any $X= h_1 \otimes e_1 + h_2 \otimes e_2 \in U$,
condition
$AX= 0$ implies
$e_2 =
\frac{\gamma }{(-\alpha + \beta)}e_1$, i.e.  $U$ is the decomposable subspace
$(h_1 +  \frac{\gamma }{(-\alpha + \beta)}h_2) \otimes p_1(U)$. Viceversa it is clear that any decomposable
subspace $U= h \otimes F$ is nilpotent of degree 1; moreover, all $A \in
\widetilde Q$  with $\ker A= h \otimes E$ annihilate $U$.

More generally, let $U$ be a $A$-nilpotent subspace where  $A \in \widetilde Q$ with $\ker A= h\otimes E$.
Let $(h_1\equiv h, h_2)$ be a symplectic basis of $H$ (then $A(h_2 \otimes E)= h_1 \otimes E$).
 For any $X=h_1 \otimes e_1 + h_2 \otimes e_2 $ with $ e_2 \neq 0$ in $U$ the vector $AX \in U$ implies that
 $h_1 \otimes e_2 \in U$. So, being $E_1= p_1(U), \; E_2= p_2(U)$, the $A$-invariance of $U$ implies that
  $h_1 \otimes E_2 \subset U$ ($\Rightarrow$ $E_2 \subseteq E_1$). Viceversa,
  let $U$ be a subspace. If w.r.t. a symplectic basis $(h_1,h_2)$ the subspace  $h_1 \otimes p_2(U) \subseteq U$, then, for
 any $A \in \widetilde{Q}, \; \| A \|^2=0$ with  $\ker A= h_1 \otimes E$,
 the subspace $U$ is clearly $A$-nilpotent.

\end{proof}

Clearly all para-quaternionic subspaces are nilpotent of degree 2 w.r.t. any nilpotent structure in
$\widetilde Q$.

From previous proposition, we have the  following  characterization of  nilpotent subspaces with respect
 to proposition (\ref{decomposition of a subspace}).
\begin{prop} \label{proposition about decomposition of a nilpotent subspace}
Let $A \in \widetilde Q$ be a nilpotent endomorphism such that $\ker A= h_1 \otimes E$ where $(h_1, h_2)$ is a
symplectic basis. The subspace
\begin{equation} \label{decomposition of a nilpotent subspace}
U= (H \otimes E_0) \oplus (h_1 \otimes E_1'') \oplus \{ h_1 \otimes \bar{e_1} + h_2 \otimes  T' \bar{e_1}, \; \bar{e_1} \in \bar{E_1} \}
\end{equation}
with $\bar{E_1}\cap E_1''= \{0\}$, $T' \bar{E_1} \subset E''$ and $T'$ injective is $A$-nilpotent. The subspace \[U'=(h_1 \otimes E_1'') \oplus \{ h_1 \otimes \bar{e_1} + h_2 \otimes  T' \bar{e_1}, \; \bar{e_1} \in \bar{E_1} \}\]
 is pure nilpotent of the form $U'=U^{F,T}$ with
 $F= E_1'' \oplus \bar{E_1}$ and $T= 0 \oplus T': E_1'' \oplus \bar{E_1} \rightarrow E_1''$.

 Viceversa, any $A$ nilpotent subspace can be written in the form  (\ref{decomposition of a nilpotent subspace})
i.e. it is direct sum of a para-quaternionic subspace, a decomposable subspace $(h_1 \otimes E_1'')$ and a subspace
 $\{ h_1 \otimes \bar{e_1} + h_2 \otimes  T' \bar{e_1}, \; \bar{e_1} \in \bar{E_1} \}$ with $T'$ injective, $\bar{E_1}\cap T\bar{E_1}= \{0\}$ (in next section such a subspace will be called real) and $T' \bar{E_1} \subset E_1''$.

Moreover a sufficient  condition for $U$ to be not degenerate is that $p_2(U)$ is $\omega^E$-symplectic.

\end{prop}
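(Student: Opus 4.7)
The plan is to prove the three claims of the proposition in sequence: (a) the subspace written in the prescribed form is $A$-nilpotent and the pure piece $U'$ is a $U^{F,T}$ subspace; (b) conversely every $A$-nilpotent subspace admits such a decomposition; (c) $\omega^E|_{p_2(U)}$ being symplectic implies $g|_U$ is non-degenerate.

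For (a), fix the nilpotent $A \in \widetilde Q$ with $\ker A = h_1 \otimes E$, normalized so $A(h_2 \otimes e) = h_1 \otimes e$, and check $A$-invariance summand by summand: $H \otimes E_0$ is para-quaternionic and so $A$-invariant; $h_1 \otimes E_1'' \subset \ker A$; and $A(h_1 \otimes \bar e + h_2 \otimes T'\bar e) = h_1 \otimes T'\bar e \in h_1 \otimes E_1''$ by the hypothesis $T'\bar{E_1} \subseteq E_1''$. The identification $U' = U^{F,T}$ follows directly from Definition~\ref{spazi U^{F,T}} with $F = E_1'' \oplus \bar{E_1}$ and $T = 0 \oplus T'$; purity then comes from Proposition~\ref{spazi U(F,L)}(a), as $(h_2 \otimes E) \cap U' = \{0\}$ is immediate.

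For (b), given an $A$-nilpotent $U$, set $E_0 = \{e \in E : h_1 \otimes e,\, h_2 \otimes e \in U\}$ (so that $H \otimes E_0$ is the maximal para-quaternionic subspace of $U$), and put $E_1^* = \{e : h_1 \otimes e \in U\}$, $F_2 = \{e : h_2 \otimes e \in U\}$. Proposition~\ref{nilpotent subspaces} gives $h_1 \otimes p_2(U) \subseteq U$, hence $p_2(U) \subseteq E_1^*$ and $F_2 \subseteq p_2(U) \subseteq E_1^*$; by definition of $E_0$ this forces $F_2 = E_0$. Pick a complement $\bar{E_1}$ of $E_1^*$ inside $p_1(U)$ and a complement $\hat E$ of $E_0$ inside $p_2(U)$. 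For each $\bar e \in \bar{E_1}$ there exists a lift $h_1 \otimes \bar e + h_2 \otimes c \in U$ with $c$ unique in $\hat E$ (using $F_2 \cap \hat E = \{0\}$), defining an injective linear map $T' : \bar{E_1} \to \hat E$; the dimension identity $\dim U = \dim E_1^* + \dim p_2(U) = \dim p_1(U) + \dim F_2$ yields $\dim \bar{E_1} = \dim \hat E$, so $T'$ is a bijection onto $\hat E$. Choosing $E_1''$ any complement of $E_0$ in $E_1^*$ that contains $T'\bar{E_1}$, one checks directly that every $X = h_1 \otimes a + h_2 \otimes b \in U$ decomposes uniquely as the sum of a third-piece element (determined by the $\bar{E_1}$-component of $a$), an $h_1 \otimes E_1''$ element, and a residue in $H \otimes E_0$, invoking $F_2 = E_0$ to handle the $p_2$-residue at the last step.

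For (c), suppose $X = h_1 \otimes a + h_2 \otimes b$ lies in the radical of $g|_U$. Pairing $X$ with $Y = h_1 \otimes e$ for $e \in p_2(U) \subseteq E_1^*$ gives $\omega^E(b, e) = 0$ for all $e \in p_2(U)$, and since $b \in p_2(U)$ and $\omega^E|_{p_2(U)}$ is symplectic by hypothesis, $b = 0$. Then $a \in E_1^*$, and pairing with general $Y = h_1 \otimes a' + h_2 \otimes b'$ forces $a \in p_2(U)^{\perp_{\omega^E}}$. The main obstacle is to conclude $a = 0$: this is automatic when $a \in p_2(U)$, which holds whenever $E_1^* \subseteq p_2(U)$; in general the ``extra'' part $E_1'' \setminus T'\bar{E_1}$ of $E_1^*$ can carry radical vectors, so one must either assume the decomposition is minimal in the sense $E_1'' = T'\bar{E_1}$, or strengthen the hypothesis so that no such extra is present, before the symplectic property $p_2(U) \cap p_2(U)^{\perp_{\omega^E}} = \{0\}$ closes the argument.
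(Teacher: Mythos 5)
Your arguments for the two structural claims are correct and follow essentially the same route as the paper. The forward direction is the same summand-by-summand check of $A$-invariance, and for the converse the paper likewise isolates the maximal para-quaternionic part $H\otimes E_0$, the decomposable part $(h_1\otimes E)\cap U=h_1\otimes E_1'$ (your $h_1\otimes E_1^*$) and a graph-type complement; the only real difference is that the paper parametrizes the graph piece by its $h_2$-component via a map $\tilde T\colon E_2\to\bar E_1$ and then passes to $(T')^{-1}$, while you parametrize it directly by the $h_1$-component, which makes the injectivity of $T'$, the inclusion $T'\bar E_1\subseteq E_1''$ and the directness of the sum slightly cleaner. Your observations that $F_2=E_0$ and that the two dimension counts $\dim U=\dim E_1^*+\dim p_2(U)=\dim p_1(U)+\dim F_2$ force $\dim\bar E_1=\dim\hat E$ are exactly the points the paper leaves implicit.

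On the non-degeneracy statement your hesitation is justified: you have located a genuine defect in the claim rather than a gap in your own argument. The paper's proof shows that a radical vector of $g|_U$ forces the pairing $\omega^E(E_2\times E_1')$ to be degenerate and then asserts that the conclusion follows; but the hypothesis that $p_2(U)=E_2$ is symplectic only controls $\omega^E$ on $E_2\times E_2$, not on $E_2\times E_1'$ when $E_1'\supsetneq E_2$. Concretely, in $E=\R^4$ with symplectic basis $e_1,\dots,e_4$ the subspace $U=(H\otimes\langle e_1,e_2\rangle)\oplus(h_1\otimes\langle e_3\rangle)$ is $A$-nilpotent and of the form (\ref{decomposition of a nilpotent subspace}) with $E_0=\langle e_1,e_2\rangle$, $E_1''=\langle e_3\rangle$, $\bar E_1=\{0\}$; it has $p_2(U)=\langle e_1,e_2\rangle$ symplectic, yet $h_1\otimes e_3$ lies in the radical of $g|_U$. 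The correct sufficient condition is the one you isolate: $p_2(U)$ symplectic together with $E_1''=T'\bar E_1$, equivalently $(h_1\otimes E)\cap U=h_1\otimes p_2(U)$, under which your pairing argument (first kill $b$ against $h_1\otimes p_2(U)$, then kill $a\in E_1^*=p_2(U)$ against the $h_2$-components) closes. So your proof of the last claim is incomplete only because the claim as literally stated is false; add the strengthened hypothesis and finish the computation under it.
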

\begin{proof}
The subspace $U$ in (\ref{decomposition of a nilpotent subspace}) is clearly $A$-nilpotent w.r.t. all $A \in \widetilde Q$ such that $\ker A= h_1 \otimes E$.
Viceversa if $U$ is a $A$-nilpotent subspace with $\ker A= h_1 \otimes E$, from proposition (\ref{nilpotent subspaces})
we have $h_1 \otimes p_2(U) \subset U$ w.r.t. all symplectic basis $(h_1,h)$.
Let then fix a basis $(h_1,h_2)$. Let $(h_1 \otimes E) \cap U= h_1 \otimes E_1'$ and $p_1(U)= E_1'  \oplus \bar{E_1}$.
Then
\begin{equation} \label{nilpotent subspace expession}
\begin{array} {lll}
U  =  (h_1 \otimes E_1') \oplus \{ h_2 \otimes e_2 + h_1 \otimes \tilde T e_2, \; e_2 \in E_2\}, \;   E_2 \subseteq E_1', \; E_2 \cap  \tilde T E_2= \{ 0\},
\end{array}
\end{equation}
where $\tilde T E_2=  \bar {E_1}$ and  the complement
\begin{equation} \label{nilpotent subspace U_FT part}
\tilde U= \{ h_2 \otimes e_2 + h_1 \otimes \tilde T e_2\}
\end{equation}
is of type $U^{F,T}$ with $\tilde T: E_2 \rightarrow \bar{E_1}$.

We know that a necessary condition for a subspace $U$ to be nilpotent is the presence of a decomposable subspace in $U$.
 More precisely, condition $E_2 \subseteq E_1'$ in (\ref{nilpotent subspace expession}), implies that whether $U$
 contains a para-quaternionic subspace (in case $\tilde T$ is not injective) and then $h \otimes E \cap U \neq 0, \; \forall h \in H$,
  or $U=U^{F,T}$ (if $\tilde T$ is injective) and in this case  one and only one decomposable subspace is in the pure nilpotent subspace $U$.
(In the next section we will see that, in this second case, the addend $\tilde U$ in (\ref{nilpotent subspace U_FT part}) is a real subspace).

In  case $U$ is not pure, let $E_2= E_0 \oplus E_2'$ and  $E_1'= E_0 \oplus E_1''$ with  $E_0= \ker \tilde T$ and $E_1'', \, E_2'$ some  complementaries.
 Then
\begin{equation} \label{nilpotent decomposition}
U= (H \otimes E_0) \oplus (h_1 \otimes E_1'') \oplus \{ h_2 \otimes e_2' + h_1 \otimes  (T')^{-1} e_2',  \quad e_2' \in E_2' \}
\end{equation}
with $(T')^{-1}: E_2' \rightarrow \bar{E_1}$ an isomorphism and $E_2' \subset E_1''$.


Let us look for the sufficient condition $U$ to be non degenerate.
Let $X_0 \in U$ and suppose $g(X_0,Y)=0, \; \forall Y \in U$ with $X_0=h_1 \otimes e_0' + h_2 \otimes e_0 + h_1 \otimes \bar e_0$  and
$Y=h_1 \otimes f' + h_2 \otimes f + h_1 \otimes \bar f, \; e_0', f' \in E_1', e_0, f \in E_2, \bar{e_0}, \bar{f} \in \bar{E_1}$ according to the decomposition
given in (\ref{nilpotent subspace expession}). Then
\[
-\omega^E(e_0, f'+ Tf) + \omega^E(e_0' -T e_0,f)= 0, \; \forall f \in E_2, \forall f' \in E_1'.
\]
This implies that $\omega^E(E_2,E_1')$ is degenerate. Then conclusion follows.

\end{proof}
Note that, from (\ref{nilpotent decomposition}), every non trivial pure nilpotent subspace contains
a non trivial pure weakly para-complex subspace.
Moreover any  pure weakly para-complex subspace is direct sum of a  pure para-complex subspace and a degree 1 nilpotent subspace.



\subsection{Real subspaces} \label{section: Totally real subspaces}
\begin{defi} A  subspace $U \subset V$ is called
\textbf{real} if $AU \cap U= \{0\}, \quad \forall A \in \widetilde Q$.
Equivalently, $U$ does not contain either a non trivial  complex or weakly  para-complex
subspace.
\end{defi}

Let prove the above equivalence. If $AU \cap U = \{ 0 \}, \; \forall A \in \widetilde Q$,
clearly no non trivial complex or weakly para-complex subspaces are in $U$.
Viceversa let $U$ contain no non trivial complex or weakly para-complex  subspaces.
Then, as remarked in the previous section, it contains no non trivial nilpotent subspaces as well.

 \textit{A real subspace $U$ is pure}. Also,  $ \dim U \leq \frac{1}{2}\dim V$.

\begin{defi} A  non degenerate real subspace $U \subset V$ is
called  \textbf{totally real} if for one and hence for any para-hypercomplex basis $(I,J,K)$ of $\widetilde Q$,
\[IU \perp U, \qquad JU \perp U, \quad KU \perp U\]
or equivalently if the Hermitian product (\ref{Hermitian scalar
product on a PQH vector space})  has real values   for any admissible basis
$(I,J,K)$ of $\widetilde Q$ \footnote{In fact, in \cite{B} such
subspace is called a {\it subspace with real Hermitian product.}}.
\end{defi}
The implication in the first statement is straightforward to verify.
In this case $ \dim U \leq \frac{1}{4}\dim V$.

\begin{teor} A subspace $U \subseteq V$ is real  iff w.r.t. a symplectic basis $(h_1,h_2)$ it is $U=U^{F,T}$ where
 the linear map  $T: F=E_1=p_1(U) \rightarrow
p_2(U)$  is an isomorphism such that, for any non trivial subspace
$W \subset F \cap TF$, it is  $TW \nsubseteq W$.

The subspace $U$ is non degenerate if and only $F$ is $g_F$-non degenerate.

Let $E_2=TE_1$. The real subspace $U$ is totally real if and only if
\begin{equation} \label{totally real conditions}
 \omega^E|_{E_1}=\omega^E|_{E_2} \equiv 0 \qquad  \text{and} \quad T|_{E_1} \; \; \text{ is} \; \omega^E|_F-\text{skew-symmetric}.
\end{equation} which implies $E_1 \cap E_2= \{0\}$.
\end{teor}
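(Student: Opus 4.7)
The plan is to prove the three claims of the theorem in order: the realness characterization, the non-degeneracy criterion, and the totally real conditions.

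For the realness characterization I proceed in two directions. In the forward direction, if $U$ is real then in particular no non-zero $h\otimes E'\subseteq U$, since any such decomposable subspace is preserved by any para-complex $K\in\widetilde Q$ with $Kh=\pm h$; hence by lemma \ref{subspaces with no decomposable} any symplectic basis gives $U=U^{F,T}$ with $T$ injective, $F=p_1(U)$ and $TF=p_2(U)$. To rule out non-trivial $T$-invariant subspaces of $F\cap TF$ I assume such a $W$ exists and distinguish by the spectrum of $T|_W$. A real eigenvalue $\lambda$ with eigenvector $w$ yields a decomposable vector $(h_1+\lambda h_2)\otimes w\in U$, already excluded. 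Only complex eigenvalues $\alpha\pm i\beta$ with $\beta\neq0$ remain, and they produce a two-dimensional $T$-invariant $W_0\subseteq W$ satisfying $T^2|_{W_0}=2\alpha T-(\alpha^2+\beta^2)\mathrm{Id}$; from (\ref{standard hypercomplex structure}) a direct computation shows that the element $A=aI+bJ+cK\in\widetilde Q$ with $b-a=1/\beta$, $c=\alpha/\beta$, $a+b=-(\alpha^2+\beta^2)/\beta$ satisfies $a^2-b^2-c^2=1$ (so is a compatible complex structure) and preserves $\widetilde U:=\{h_1\otimes w+h_2\otimes Tw:w\in W_0\}\subseteq U$, contradicting $AU\cap U=\{0\}$.

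For the converse, which is the main technical step, I assume $AU\cap U\neq\{0\}$ for some non-zero $A=aI+bJ+cK\in\widetilde Q$ and introduce $W:=\{f\in F:AX_f\in U\}$, where $X_f:=h_1\otimes f+h_2\otimes Tf$; $W\neq\{0\}$ by hypothesis. A direct application of (\ref{standard hypercomplex structure}) shows that $AX_f\in U$ translates into $-cf+(-a+b)Tf\in F$ and $(-a+b)T^2f=(a+b)f+2cTf$ on $W$. The analysis splits according to $-a+b$ and $a+b$: if $-a+b\neq0$ and $a+b\neq0$, the first relation gives $TW\subseteq F$ while solving the second for $f$ yields $W\subseteq TF$, so $W+TW\subseteq F\cap TF$ is a $T$-invariant subspace by the second relation; if $-a+b\neq0$ and $a+b=0$, the second relation forces $T|_{TW}$ to act as a scalar, exhibiting $TW$ as a non-trivial $T$-invariant subspace of $F\cap TF$; and if $-a+b=0$, ruling out the degenerate subcases by $A\neq0$ and $T$ injective, the relation reduces to $Tf=-(a/c)f$ with $a,c\neq0$, so $W$ itself is a $T$-invariant subspace of $F\cap TF$. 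In every case this contradicts the hypothesis.

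Part (ii) is immediate from the metric isomorphism $\phi:(F,g_F)\to(U,g|_U)$ of (\ref{isomorphism U_F}) and (\ref{Hermitian metric on F}). For (iii), expanding $g$ via (\ref{standard hypercomplex structure}) gives $g(IX_f,X_{f'})=-\omega^E(f,f')-\omega^E(Tf,Tf')$, $g(JX_f,X_{f'})=\omega^E(Tf,Tf')-\omega^E(f,f')$ and $g(KX_f,X_{f'})=-\omega^E(Tf,f')-\omega^E(f,Tf')$ for all $f,f'\in F$; the simultaneous vanishing of the three is exactly the pair of conditions (\ref{totally real conditions}), the first two yielding $\omega^E|_{E_1}=\omega^E|_{E_2}=0$ and the third the stated skew-symmetry. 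Finally the implication $E_1\cap E_2=\{0\}$ comes from $g_F$-non-degeneracy of the necessarily non-degenerate totally real $U$: a hypothetical non-zero $v=Tw\in E_1\cap E_2$ would give $g_F(w,f')=-2\omega^E(Tw,f')=-2\omega^E(v,f')=0$ for every $f'\in E_1$ by $\omega^E|_{E_1}=0$, forcing $w=0$ and so $v=0$.
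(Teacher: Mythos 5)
Your proof is correct and follows essentially the same route as the paper's: exclude decomposable vectors to get the $U^{F,T}$ form, translate $A$-invariance into the relation $(b-a)T^{2}f=(a+b)f+2cTf$ to trade elements of $\widetilde Q$ preserving part of $U$ for $T$-invariant subspaces of $F\cap TF$ (and conversely), and then impose the three orthogonality conditions by direct computation with (\ref{standard hypercomplex structure}). Your treatment of the converse in part (i) is in fact more careful than the paper's one-line assertion, since you handle the degenerate cases $a=b$ and $a+b=0$ explicitly.
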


\begin{proof}
Let $U$ be a real subspace.
Since no non trivial weakly para-complex subspace is in $U$ that it  contains no decomposable
 vectors and from lemma (\ref{subspaces with no decomposable}), fixed any  symplectic basis  $(h_1,h_2)$ of $H$, we can write
\[U=U^{F,T}=\{h_1 \otimes e + h_2 \otimes Te, \; e \in F=E_1= p_1(U) \}.\]
Suppose   $T \tilde F \subset  \tilde F$ for some subspace $\tilde
F \subset W=E_1 \cap E_2$,
 Then $\tilde F$ must be an even dimensional subspace direct sum of 2-dimensional
 $T$-invariant real subspaces $\tilde F_i$. We show that necessarily $\tilde F= \{0 \}$.

Let then   $\tilde F\supseteq \tilde F_i= <e,Te>_\R$ be a $T$-invariant plane, with $T(Te)= \lambda e + \mu Te$.\\
 Observe that both $\mu$ and $\lambda$ can not be zero. In fact,
 if $\lambda=0$ then $T(Te)=\mu Te$ which is excluded since $T|_W$ has no invariant lines.
 If $\mu=0$ then $T(Te)=\lambda e$ with $\lambda \leq 0$ since the vectors $e,Te$ are linearly independent.
 Then the map  $\tilde T=\frac{T}{\sqrt{|\lambda|}}$ is a complex structure on $<e,Te>$  and the
 subspace $\tilde U= \{h_1 \otimes f + h_2 \otimes  \tilde T  f, \; f \in <e,Te>\}$ is a  complex subspace  in $U$.
 So necessarily $\mu \neq 0$.

 Consider the  non null vector $X=h_1 \otimes e + h_2 \otimes Te \in U$.   For any $A \in \widetilde Q, \; A= \alpha I + \beta J + \gamma K$ with $I,J,K$
 the para-hypercomplex basis
associated to the chosen basis $(h_1,h_2)$, by hypothesis, whether $AX=0$ or $AX \notin U, \; \forall \alpha, \beta, \gamma$. Computing
 The vector $AX=0$ only if $A$ is the null map. But, for any $\gamma$ and  by choosing \[\alpha= \frac{\gamma}{\mu}(\lambda-1), \qquad \beta= \frac{\gamma}{\mu}(1+\lambda)\] the vector $AX \in U$ since,  in this case, $Te'= \tilde e$, contradiction.

Viceversa,  let  $U=U^{F,T}$ w.r.t. the symplectic basis $(h_1,h_2)$; denote $E_1=F, \; E_2=TF$,
and assume that $T: E_1 \rightarrow T(E_1)=E_2$ is an isomorphism
such that for any non trivial subspace $W \subset E_1 \cap E_2$, it
is $TW \subsetneq W$. Let  $A= \alpha I+ \beta J + \gamma K, \,
\in \widetilde Q$. Suppose there exists a non null vector  $X= h_1
\otimes e + h_2 \otimes Te \in U$, such that
 $AX= h_1 \otimes (-\gamma e + (\beta-\alpha)Te) + h_2 \otimes ((\alpha+\beta)e + \gamma Te) \neq 0$
 is in $U$. This implies that $T^2 e \in <e,Te> \subset (E_1 \cap E_2)$, which gives a contradiction.

From (\ref{Hermitian metric on F}), the subspace $U$ is non degenerate if and only $F$ is $g_F$-non degenerate.

Let $ U= \{X=h_1 \otimes e + h_2 \otimes Te, \quad e \in E_1 \}$
be a totally real subspace  in $V$. Then
\begin{equation} \label{subspaces IU,JU,KU in totally real }
\begin{array}{l}
1) \quad IU=\{Y= -h_1 \otimes Te_1 + h_2 \otimes e_1,  \quad e_1 \in E_1 \},\\
2) \quad JU=\{Y= h_1 \otimes Te_2 + h_2 \otimes e_2,  \quad e_2 \in E_1 \},\\
3) \quad KU=\{Y= -h_1 \otimes e_3 + h_2 \otimes Te_3,  \quad e_3 \in E_1 \}.\\
\end{array}
\end{equation}

Imposing orthogonality conditions $IU \perp U, \quad JU \perp U,
\quad KU \perp U$, we obtain
$\; \omega^E|_{E_1}=\omega^E|_{E_2} \equiv 0$, from 1) and 2),
and
$ \; \omega^E(e,Te') + \omega^E(Te,e')=0, \quad \forall e, e' \in E_1$  from 3).

\vskip0.2cm

Viceversa,  given a pure real subspace $U=U^{F,T}$,
from (\ref{totally real conditions})  we obtain $IU \perp U, \quad JU \perp U, \quad KU \perp U$.
For any $X=h_1 \otimes e + h_2 \otimes Te$ and $Y=h_1
\otimes e' + h_2 \otimes Te'$, the get
\begin{equation} \label{metric in totally real subspace}
 g(X,Y)=\omega^E(e,Te')-
\omega^E(Te,e')=2 \omega^E(e,Te').
\end{equation}
Since  $U$ is  non degenerate, then   $\omega^E(E_1 \times E_2)$ is  non
degenerate hence $E_1 \cap E_2= \{0\}$.
\end{proof}



\subsection{Decomposition of a generic subspace} \label{section: Decomposition of a generic subspace}


Let $U \subset V$ be a subspace of the para-quaternionic Hermitian vector space
 $(V=H \otimes E, \tilde{Q}=\mathfrak{sl}(H) \otimes Id, g=\omega^H \otimes \omega^E)$.
 For any $A \in \widetilde{Q}$ we denote by $U_A$ the maximal  $A$-invariant subspace in $U$.

The following proposition, whose proof is straightforward by a procedure of successive decompositions,  expresses that, by using para-quaternionic, pure complex, weakly pure para-complex,
and real subspaces as building blocks, we can construct any subspace $U \subset V$.
\begin{prop} \label{decomposition of a generic subspace}
Let $U$ be a subspace in $V$ and $U_{0}$ be its  maximal para-quaternionic subspace. Then $U$ admits a direct sum decomposition of the form
\[
U= U_{0} \oplus {U'} \] with
\[
U'=U_{I_1} \oplus \ldots \oplus U_{I_p}\oplus U_{J_1} \oplus \ldots \oplus  U_{J_q} \oplus
U^R,
\]
 where
the $U_{I_i}, \, i=1,\dots,p,$ are pure $I_i$-complex subspaces,
the $U_{J_j}, \, i=1,\dots,q,$ are  $J_j$-pure weakly para-complex subspaces and
$U^R$ is real.

By using as building blocks pure para-complex subspaces instead of pure weakly para-complex, we necessarily need to use also pure nilpotent subspaces.

\end{prop}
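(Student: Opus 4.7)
The plan is to argue by iterated extraction of invariant summands, using the structural results of the previous subsections as building blocks. First I would form $U_{0} := U \cap \bigcap_{A \in \tilde Q} A(U)$, the maximal para-quaternionic subspace of $U$, and choose any vector-space complement $U'$ inside $U$, giving $U = U_{0} \oplus U'$. By maximality of $U_{0}$, the subspace $U'$ is pure, i.e.\ it contains no non-zero para-quaternionic subspace.

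Next, working inside the pure $U'$, I would extract pure complex summands one at a time. If $U'$ admits any non-trivial $I$-invariant subspace for some compatible complex structure $I \in \tilde Q$, pick one of maximal dimension; since $U'$ is pure this invariant subspace is automatically a pure $I$-complex subspace $U_{I_{1}}$ (by lemma \ref{equivalent definition of pure-complex subspace}). Take any complement of $U_{I_{1}}$ in $U'$ and iterate. Because dimensions strictly decrease, after finitely many steps we reach $U' = U_{I_{1}} \oplus \cdots \oplus U_{I_{p}} \oplus U''$ in which $U''$ admits no non-trivial complex subspace. I would then run the completely analogous procedure inside $U''$ with weakly para-complex subspaces, producing $U'' = U_{J_{1}} \oplus \cdots \oplus U_{J_{q}} \oplus U^{R}$. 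By construction $U^{R}$ is pure and contains no non-trivial complex or weakly para-complex subspace, hence by the equivalent characterisation of real subspaces given in Section \ref{section: Totally real subspaces}, $U^{R}$ is real.

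The main technical worry is that the complement chosen at each stage actually inherits the property of containing no subspace of any previously-extracted type. This is really a maximality argument: any non-trivial $A$-invariant subspace surviving in a later complement would, together with the already-extracted $A$-invariant summand, produce a larger $A$-invariant direct sum inside the ambient pure piece, contradicting the maximality of the previous extraction. So the iteration is forced and terminates in a decomposition of the required shape.

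For the final clause I would argue as follows. A pure weakly para-complex factor $U_{J_{j}}$ is para-complex exactly when its $\pm 1$-eigenspaces have equal dimension; otherwise, by Proposition \ref{decomposition of a para-complex subspace} combined with Proposition \ref{proposition about decomposition of a nilpotent subspace}, it decomposes as a direct sum of a pure para-complex summand and a degree-$1$ nilpotent (decomposable) summand. Thus, if one insists on pure \emph{para-complex} (rather than pure weakly para-complex) building blocks, one is forced to admit pure nilpotent subspaces among the building blocks as well, which is exactly the last assertion of the proposition.
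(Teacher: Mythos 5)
Your argument is correct and follows exactly the route the paper has in mind: it merely states that the proof is ``straightforward by a procedure of successive decompositions,'' and your iterated extraction of the maximal para-quaternionic part, then pure complex, then pure weakly para-complex summands, with the real remainder identified via the equivalent characterisation of real subspaces, is precisely that procedure spelled out. Your justification of the final clause via the remark that a pure weakly para-complex subspace splits as a pure para-complex subspace plus a degree-one nilpotent (decomposable) summand also matches the paper's intent.
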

As an example of the last statement  let think of a decomposable subspace $U=h\otimes F, \, h \in H, \, F \subset E$.

The decomposition of the proposition (\ref{decomposition of a generic subspace}) is clearly not unique even up to reordering of addends. The first reason depends  obviously on  the non uniqueness of the complement at each steps of the decomposition. Moreover the decomposition depends on the  chosen order of types of subspaces i.e. if we first consider pure complex subspaces and then pure weakly para-complex or the other way round. Not taking into account the metric,   we intend to further investigate if the different possible decompositions,
choosing the addends by decreasing dimension and  fixing the order of the decomposition,  are unique up to isomorphisms i.e. have addends of same type and dimension.

\begin{Acknow}
The author  would like to warmly thank Professor D. Alekseevsky for the fundamental help and
the advise given during the whole research.
\end{Acknow}


\end{document}